\newcommand{\imod}[1]{\allowbreak\mkern4mu({\operator@font mod}\,\,#1)}
\theoremstyle{plain}
\newtheorem{thm}{\protect\theoremname}[section]
  \theoremstyle{definition}
  \newtheorem{defn}[thm]{\protect\definitionname}
  \theoremstyle{plain}
  \newtheorem{cor}[thm]{\protect\corollaryname}
  \theoremstyle{plain}
  \newtheorem{lem}[thm]{\protect\lemmaname}
  \theoremstyle{plain}
  \theoremstyle{remark}
  \newtheorem*{rem*}{\protect\remarkname}
  \theoremstyle{remark}
  \newtheorem{rem}[thm]{\protect\remarkname}
\date{}
\theoremstyle{plain}
\newtheorem{mainthm}{Theorem}
\DeclareMathOperator{\pw}{pw}
\newcommand{\op}{\text{o}_{p}}
  \providecommand{\corollaryname}{Corollary}
  \providecommand{\definitionname}{Definition}
  \providecommand{\lemmaname}{Lemma}
  \providecommand{\propositionname}{Proposition}
  \providecommand{\remarkname}{Remark}
\providecommand{\theoremname}{Theorem}
\begin{document}

\author{Alexander J. Malcolm}

\address{Alexander J. Malcolm, School of Mathematics, University of Bristol, Bristol, BS8 1TW, UK, and the Heilbronn Institute for Mathematical Research, Bristol, UK.}
\email{alex.malcolm@bristol.ac.uk}

\title{The $p$-Width of the Alternating Groups}
\subjclass[2010]{Primary 20D06}
\begin{abstract}
 Let $p$ be a fixed prime. For a finite group generated by elements of order $p$, the $p$-width is defined to be the minimal $k\in\mathbb{N}$ such that any group element
can be written as a product of at most $k$ elements of order $p$. Let $A_{n}$  denote the alternating group of even permutations on $n$ letters. We show that
the $p$-width of  $A_{n}$ $(n\geq p)$ is at
most $3$. This result is sharp, as there are families of alternating groups with $p$-width precisely 3, for each prime $p$.
\end{abstract}

\date{\today}
\maketitle

\section{Introduction}
Let $p$ be a fixed prime and $G$ a finite group generated by elements of order $p$. The $p$-width, denoted $\pw(G)$,  is defined to be the minimal $k\in\mathbb{N}$ such that any element of $G$
can be written as a product of at most $k$ elements of order $p$. If $G$ is a non-abelian finite simple group of order divisible by $p$, then it is clear that $G$ is generated by its order $p$ elements. Furthermore, it was shown by Liebeck and Shalev \cite[Thm. 1.5]{LS 01} that there exists an absolute constant that bounds the $p$-width of any such finite simple group.
In this paper we find a sharp bound for the $p$-width of $A_{n} (n\geq p),$ the alternating group of even permutations on $n$ letters.

Even permutations as the product of two conjugate cycles was first considered by Bertram \cite{Bertram 1972}. In particular, he showed that $\lfloor 3n/4 \rfloor\leq p\leq n$ is the necessary and sufficient condition on $p$ in order that every element of $A_{n} (n\geq 5)$  can be expressed as a product of two $p$-cycles (\cite{Bertram 1972} and Thms. \ref{thm:(Bertram):-Regular even Bertram}, \ref{thm:(Bertram):odd bertram} below). This work was extended by Bertram and Herzog \cite{Bertram Herzog} to consider products of three and four $p$-cycles, and then by Herzog, Kaplan and Lev \cite{Herzog et al} to an arbitrary number of $p$-cycles (under some conditions). Products of conjugacy classes in $A_{n}$ have also been studied extensively by Dvir \cite{Dvir}. In particular, \cite[Thm. 10.2]{Dvir} gives a sufficient condition for three copies of a given conjugacy class to cover $A_{n}$ (see Remark \ref{rem: 1st rem} and Appendix \ref{sec:Alt proof})

In the more general case of non-abelian finite simple groups, the problem of the $2$-width (more commonly known as the involution width) has been solved by this author \cite{AJM 2017}: the involution width of any finite simple group is at most four, and this bound is sharp \cite[Thm. 1]{AJM 2017}. Specifically, the involution width of $A_{n}$  is two if $n\in \{5,6,10,14\} $  and three otherwise \cite[Thm. 2.3]{AJM 2017}. 

The work of this paper addresses the alternating groups in the remaining cases where $p\geq 3$ and is summarised by the following theorem.

\begin{mainthm}
Fix a prime $p$. The $p$-width of $A_{n}$ $(n\geq p)$ is at most
three.\label{thm:my theorem}
\end{mainthm}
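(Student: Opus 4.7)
The plan is to split by the size of $p$ relative to $n$ and treat each range by a statement about products of conjugacy classes of order-$p$ elements in $A_n$. Since $p\geq 3$ is odd, every element of $S_n$ of order $p$ is automatically even, so the available classes are exactly those with cycle type $p^{j}1^{n-jp}$ for $1\leq j\leq \lfloor n/p\rfloor$; I will label them $C_{j}$.

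When $\lfloor 3n/4\rfloor \leq p\leq n$, Bertram's theorems \ref{thm:(Bertram):-Regular even Bertram} and \ref{thm:(Bertram):odd bertram} already give that every element of $A_n$ is a product of two $p$-cycles, so three factors are immediate. For the main range $p<\lfloor 3n/4\rfloor$ one is forced to exploit order-$p$ elements with several $p$-cycles. Writing $n=qp+r$ with $0\leq r<p$, my first attempt would be to take the maximal class $C=C_{q}$ and try to establish $C^{3}=A_{n}$ by verifying the sufficient cycle-structure criterion of \cite[Thm.~10.2]{Dvir}. For target cycle types on which that hypothesis fails, I would replace one or two of the factors by elements of a strictly smaller class $C_{q'}$ and re-apply the criterion, giving a mild hierarchy of classes rather than a single uniform choice. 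An equivalent, more hands-on reformulation I would keep in reserve is: given $\sigma\in A_n$, choose $x\in C_{q}$ so that $x\sigma$ has a cycle type admitting a two-factor decomposition into order-$p$ elements, and then appeal to the two-factor theorems of Bertram--Herzog \cite{Bertram Herzog} and Herzog--Kaplan--Lev \cite{Herzog et al} as black boxes.

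The main obstacle will be controlling the residue $r=n\bmod p$ and the parity/cycle-type conditions in Dvir's criterion. When $r$ is close to $0$ the factors have nearly full support and there is very little room to adjust, while when $r$ is close to $p$ the class $C_{q}$ has many fixed points, making certain long-cycle targets hard to realise; these extremal residues, together with small values of $n$ (roughly $n$ only a small multiple of $p$, where Dvir's hypothesis is typically violated), will almost certainly require direct constructions that explicitly partition $\{1,\ldots,n\}$ into blocks compatible with the supports of three prescribed factors. Finally, the sharpness claim stated in the abstract will require producing, for each prime $p$, an infinite family in which some element of $A_n$ provably cannot be written as a product of two order-$p$ elements; a natural candidate is a long cycle whose length forces any two-factor product to have insufficient support.
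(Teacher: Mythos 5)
Your primary strategy --- apply Dvir's covering criterion \cite[Thm.~10.2]{Dvir} to the largest class $C_{q}$ of cycle type $(p^{q},1^{n-qp})$ with $q=\lfloor n/p\rfloor$ --- is correct, and it is in fact exactly the paper's own Appendix~\ref{sec:Alt proof} (Theorem~\ref{thm: Dvir Thm} and Corollary~\ref{cor:Dvir cor}); the paper's main body instead gives a self-contained combinatorial proof via Bertram's two-cycle theorems, decomposing $\sigma$ cycle-by-cycle (Lemmas~\ref{lem:a cycles lemma}--\ref{lem:cd cycles second lemma}), which is roughly what your ``hands-on reformulation'' gestures at. The main thing to realise is that the Dvir route needs none of the hedging you build in: for $D$ of type $(p^{q},1^{n-qp})$ one has $r(D)=q(p-1)$, and with $n\leq (q+1)p-1$ the inequality $q(p-1)\geq\tfrac{1}{2}(n-1)$ reduces to $(q-1)(p-2)\geq 0$, which always holds; the excluded type $2^{k}$ never occurs since $p$ is odd. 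Moreover Dvir's hypothesis is a condition on the class being cubed, not on the target element, so once it holds for $C_{q}$ every element of $A_{n}$ is covered simultaneously --- there are no ``target cycle types on which that hypothesis fails,'' no problematic residues $r=n\bmod p$, and no small-$n$ exceptions to patch by hand (only $p=2$ needs a separate reference, to \cite[Thm.~2.3]{AJM 2017}). So your plan closes in two lines rather than requiring the auxiliary block constructions you anticipate. The trade-off between the two routes is the one the paper itself acknowledges in Remark~\ref{rem: 1st rem}: the Dvir argument is far shorter but uses \cite{Dvir} as a black box, whereas the main-body proof is elementary and explicit about supports (the ``strong decompositions'' and free-letter bookkeeping), which is what lets the disjoint pieces of $\sigma$ be glued back together. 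Your closing remark on sharpness matches the paper's Remark~\ref{rem:sharpness of the alternating groups}, which derives the lower bound from $\max_{g}l(g)=\lfloor 3n/4\rfloor$ in Theorem~\ref{thm:(Bertram):-Regular even Bertram}.
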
 
The bound in Theorem \ref{thm:my theorem} is
best possible as for each choice of $p$ there exist collections of
alternating groups of $p$-width three. When $p=2$ all but $A_{5},A_{6},A_{10},A_{14}$
have $2$-width three \cite[2.3]{AJM 2017}, and for $p\geq3$ we produce approximately
$2p/3$ alternating groups of $p$-width three (see Remark \ref{rem:sharpness of the alternating groups}
below). 

The proof of Theorem \ref{thm:my theorem} is of a direct combinatorial nature: we show how permutations in $A_{n}$  decompose as  products of elements of order $p$, based on their cycle type (see Sec. \ref{subsec:overview} for a full overview of the proof). We also extensively use the work of Bertram \cite{Bertram 1972} and therefore adopt most of his notation. Note however that we read our cycles from left to right. For example $(1\,2)(1\,3)=(1\,2\,3)$ not $(1\,3\,2).$

\begin{rem} \label{rem: 1st rem}
	A greatly simplified proof of Theorem \ref{thm:my theorem} can be given using the aforementioned result of Dvir \cite[Thm. 10.2]{Dvir} (see Appendix \ref{sec:Alt proof} for a full explanation). The author was unaware of this result when the work in this paper was completed.
\end{rem} 

The  $p$-width is one of a number of width questions that have
been considered about simple groups in recent literature. For example,
\cite{Ore Conjecture paper} settles the longstanding conjecture of
Ore that the commutator width of any finite non-abelian simple group
$G$ is exactly one. Also, $G$ is generated by its set of squares and
the width in this case is two \cite{lost squares}. More generally,
given any two non-trivial words $w_{1},w_{2}$, if $G$ is of large
enough order then $w_{1}(G)w_{2}(G)\supseteq G\backslash\{1\}$ \cite{Guralnick and Tiep}. 
The $p$-width of the sporadic finite simple groups (as well as some low-rank groups of Lie type) is addressed in the author's PhD thesis \cite{AJM Thesis}. Further work on the $p$-width of the finite simple groups of Lie type will be forthcoming.

\textit{Acknowledgments}. The work in this paper forms part of a PhD thesis, completed under the supervision of Martin Liebeck at Imperial College London, and the author wishes to thank him for his support and guidance throughout. The author is also thankful to EPSRC for their financial support during his PhD.

\section{Preliminary Material}
For the remainder of this paper, $p$  will denote a fixed odd prime, and $A_{n}$ is such that $n\geq	5$ and $n\geq	p$.

\begin{defn}
We call elements of order $p$, $\op$-elements and define
\[
I_{p}(A_{n}):=\{g\in A_{n}\,|\,g^{p}=1\}.
\]
The $p$-width
of a permutation $g\in A_{n}$, denoted $\pw(g)$, is the minimal $k\in\mathbb{N}$
such that $g$ can be written as a product of $k$ elements in $I_{p}(A_{n})$.
\end{defn}
\begin{rem*}
Recall that elements of order $p$ have restricted cycle structure:
$g\in S_{n}$ has prime order if and only if it is the product of
a finite number of $p$-cycles. The inclusion of the identity in $I_{p}(A_{n})$ is simply for convenience:
the methods in Section \ref{sec:Main Sec} may produce trivial
elements depending on the permutations considered in $A_{n}$. Naturally
any trivial elements will then be ignored when considering products
in $I_{p}(A_{n})$ of minimal length.
\end{rem*}

\begin{defn}
\label{def:Initial definitions of An}Let $g\in S_{n}.$ Denote the
support of $g$ (i.e. the letters that are moved by $g$) by the set
$\mu(g)$. Also, denote by $c^{*}(g)$, the number of non-trivial
cycles in the decomposition of $g$. 

Let $g=g_{1}\dots g_{k}$ be a product of $g_{i}\in S_{n}.$ If there
exists $x\in\mu(g_{i})$ such that $x\notin\mu(g_{j})$ for $i\neq j$
then we call $x$ a \textit{free letter} for $g$, with respect to
the decomposition $g=g_{1}\dots g_{k}$.

Let $g=c_{1}\dots c_{k}\in S_{n}$ be an element written as a product
of disjoint cycles and let $I\subsetneq\{1,\dots,k\}$. We call $g_{I}:=\prod_{i\in I}c_{i}$
a \textit{sub-element} of $g$, corresponding to the \textit{sub-collection
of cycles} $\{c_{i}\}_{i\in I}$. Furthermore, if $g_{I}$ is such
a sub-element then we define $g\backslash g_{I}:=\prod_{i\notin I}c_{i}$.
\end{defn}

The two main results of Bertram (\cite{Bertram 1972}) are essential to the proof of Theorem \ref{thm:my theorem} and so we repeat them here for reference:
\begin{thm}
\label{thm:(Bertram):-Regular even Bertram}\cite[Thm. 1 and 2]{Bertram 1972}. Let $g\in A_{n}\setminus\{1\}$ and define
\[
l(g)=\frac{|\mu(g)|+c^{*}(g)}{2}.
\]
 Then there exist two $l$-cycles $A$ and $B$ such that $g=AB$
if and only if $n\geq l\geq l(g).$ Also, $\max_{g\in A_{n}}l(g)=\lfloor\frac{3n}{4}\rfloor$. 
\end{thm}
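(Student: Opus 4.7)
The plan is to prove three separate statements: (i) if $g=AB$ with both $A$ and $B$ being $l$-cycles, then $l\geq l(g)$; (ii) for each $l$ with $n\geq l\geq l(g)$ such a factorisation exists; and (iii) the maximum of $l(g)$ over $A_n$ equals $\lfloor 3n/4\rfloor$.

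For (i), I would work on the set $X=\mu(A)\cup\mu(B)$ of size $N\leq 2l$. Since any $x\in X\setminus\mu(g)$ must be moved by both $A$ and $B$, one has $X\setminus\mu(g)\subseteq\mu(A)\cap\mu(B)$, giving $N-|\mu(g)|\leq 2l-N$. The Hurwitz/Euler genus inequality applied to the triple $(A,B,(AB)^{-1})$ on each orbit of $\langle A,B\rangle$ in $X$ yields
\[
c(A|_X)+c(B|_X)+c(AB|_X)\leq N+2\,c(\langle A,B\rangle|_X),
\]
where cycle counts include fixed points. Substituting $c(A|_X)=c(B|_X)=N-l+1$ and $c(AB|_X)=c^*(g)+(N-|\mu(g)|)$, and bounding the orbit count suitably, a short rearrangement gives $l\geq l(g)$.

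For (ii), the approach is an explicit construction. The base case is a single odd $m$-cycle $g$, which factors as a product of two $((m+1)/2)$-cycles by a standard interleaving; free letters not in $\mu(g)$ can then be interlaced into both $A$ and $B$ to inflate them to any common length $l\in[l(g),n]$. For general $g$, one processes each non-trivial cycle of $g$ on its own support, gluing successive local factors of $A$ (respectively $B$) by identifying one shared free letter: this merger combines two short cycles of $A$ (respectively $B$) into one longer cycle while leaving the product $g$ unchanged on the original supports. The main obstacle here is the parity constraint combined with accounting for free letters: when $g$ contains transpositions (which must appear in even number to keep $g\in A_n$), or when $l(g)$ is close to $n$, one must check carefully that enough free letters remain at each gluing step to realise every target $l$ in the range $[l(g),n]$.

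For (iii), since $l(g)$ rewards both large support and many cycles, a non-trivial cycle of length $m$ contributes $(m+1)/2$ to $l(g)$ per $m$ letters used, so transpositions give the highest ratio $3/4$ per letter, then $3$-cycles at $2/3$ per letter, and so on. Constrained by $g\in A_n$, a case split on $n\bmod 4$ identifies the optimum: $2\lfloor n/4\rfloor$ transpositions when $n\equiv 0,1\pmod 4$; $2\lfloor n/4\rfloor-2$ transpositions and two $3$-cycles when $n\equiv 2\pmod 4$; and $2\lfloor n/4\rfloor$ transpositions plus one $3$-cycle when $n\equiv 3\pmod 4$. In every case the resulting value equals $\lfloor 3n/4\rfloor$.
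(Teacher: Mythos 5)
This statement is quoted verbatim from Bertram \cite{Bertram 1972}; the paper gives no proof of its own, so your proposal can only be measured against the known argument. Your part (iii) is correct and essentially complete: the per-letter contribution $(m+1)/(2m)$ of an $m$-cycle to $l(g)$ is maximised by transpositions, the parity constraint on $g$ forces the small adjustments when $n\equiv 2,3\pmod 4$, and since $l(g)$ is an integer for $g\in A_n$ the bound $l(g)\le 3n/4$ rounds down to $\lfloor 3n/4\rfloor$. Part (i), however, hides its one real difficulty in the phrase ``bounding the orbit count suitably''. With $t$ orbits of $\langle A,B\rangle$ on $X$ the inequality you invoke rearranges to $2l\ge 2N-|\mu(g)|+c^{*}(g)+2(1-t)$, which gives $l\ge l(g)$ only when $t=1$, i.e.\ when $\mu(A)\cap\mu(B)\neq\emptyset$. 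When the two $l$-cycles are disjoint you get only $l\ge l(g)-1$, and no improvement is possible because the conclusion is then false: $(1\,2)(3\,4)$ is a product of two $2$-cycles although $l(g)=3$. So the necessity direction requires the factors to meet (automatic in every application in this paper, where $2l>n$), and you must either say so or dispose of the disjoint case separately. In the intersecting case there is also a far shorter route than the Hurwitz inequality, namely Lemma \ref{lem:lemma}: if $m=|\mu(A)\cap\mu(B)|\ge1$ then $|\mu(g)|\le 2l-m$ and $c^{*}(g)\le m$, and adding these two inequalities gives $|\mu(g)|+c^{*}(g)\le 2l$ at once.

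The more serious gap is in (ii). Your inflation step --- interlacing letters outside $\mu(g)$ into both factors so that they cancel in the product --- adds at most $n-|\mu(g)|$ to the common length, so from the base factorisation of an odd $m$-cycle into two $\tfrac{m+1}{2}$-cycles sharing one letter you can reach only $l\le n-\tfrac{m-1}{2}$, not every $l$ up to $n$. Yet the theorem demands the full range: an $n$-cycle with $n$ odd must be written as a product of two $n$-cycles with no free letters available at all, e.g.\ $(1\,2\,3)=(1\,3\,2)(1\,3\,2)$. To sweep out all of $[l(g),n]$ the two factors must be allowed to overlap in progressively more letters \emph{inside} $\mu(g)$, and constructing such factorisations --- together with the gluing of local factors across the different cycles of $g$ while respecting parity --- is precisely the inductive content of Bertram's Theorem 1. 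You correctly flag the free-letter accounting as ``the main obstacle'', but that obstacle is the theorem; as it stands the construction is not supplied.
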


\begin{thm}
\label{thm:(Bertram):odd bertram}\cite[Thm. 3]{Bertram 1972}. Let
$g\in S_{n}\backslash A_{n}.$ Let $l$ be an integer satisfying 
\[
n-1\geq l\geq\frac{|\mu(g)|+c^{*}(g)-1}{2}.
\]
 Then $g$ may be expressed as a product $AB$ of an $(l+1)$-cycle
$A$ and $l$-cycle $B$.
\end{thm}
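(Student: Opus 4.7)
The plan is to reduce Theorem \ref{thm:(Bertram):odd bertram} to its even counterpart, Theorem \ref{thm:(Bertram):-Regular even Bertram}, by peeling off a transposition from $g$. Since $g\in S_n\setminus A_n$ is odd, its disjoint cycle decomposition contains at least one cycle of even length; fix such a cycle $c=(a_1,a_2,\ldots,a_{2k})$ occurring in $g$. Under the paper's left-to-right composition convention one verifies directly on each $a_i$ the identity
\[
(a_1,a_2,\ldots,a_{2k})=(a_1,a_2,\ldots,a_{2k-1})\cdot(a_1,a_{2k}).
\]
Setting $c'=(a_1,\ldots,a_{2k-1})$ and $t=(a_1,a_{2k})$, the permutations $c'$ and $g\setminus c$ are disjoint and commute, so $g=h\cdot t$ with $h:=c'\cdot(g\setminus c)$. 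Then $h$ is even, and
\[
|\mu(h)|=|\mu(g)|-1,\qquad c^{*}(h)=c^{*}(g),
\]
so $l(h)=(|\mu(g)|+c^{*}(g)-1)/2$, which is exactly the lower bound on $l$ in Theorem \ref{thm:(Bertram):odd bertram}.

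Applying Theorem \ref{thm:(Bertram):-Regular even Bertram} to $h$ at the target $l$ yields $h=C_1 C_2$ with $C_1,C_2$ both $l$-cycles, so $g=C_1(C_2 t)$. A short direct check shows that if exactly one entry of $t$ lies in $\mu(C_2)$ and the other does not, then $C_2\cdot t$ is an $(l+1)$-cycle: writing $C_2=(y_1,\ldots,y_{l-1},a_1)$ with $a_{2k}\notin\mu(C_2)$, one finds $C_2 t=(y_1,\ldots,y_{l-1},a_{2k},a_1)$. Setting $D=C_2 t$, the rearrangement
\[
g=C_1 D=D\cdot(D^{-1}C_1 D)
\]
then realises $g$ as an $(l+1)$-cycle $D$ followed by a conjugate $D^{-1}C_1 D$ of the $l$-cycle $C_1$, giving the desired decomposition.

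The main obstacle is ensuring that the Bertram decomposition $h=C_1 C_2$ can be taken with the support conditions $a_1\in\mu(C_2)$ and $a_{2k}\notin\mu(C_2)$. Since $a_{2k}$ is a fixed point of $h$, it is the natural candidate to lie outside both cycles; and $a_1\in\mu(h)$ forces it into at least one of the $C_i$'s, which after relabelling may be taken to be $C_2$. The genuine difficulty occurs in the boundary regime where $|\mu(C_1)\cup\mu(C_2)|>|\mu(h)|$: here free letters of $h$ must be absorbed into the two $l$-cycles and one risks absorbing $a_{2k}$ into $C_2$. Handling this step requires either revisiting the constructive proof of Theorem \ref{thm:(Bertram):-Regular even Bertram} to control which free letters are consumed, or a flexibility argument that varies the choice of even cycle $c$ in $g$ and the labelling of its letters so as to place $a_{2k}$ outside $\mu(C_1)\cup\mu(C_2)$; this is where the bulk of the work lies. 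The extreme case $l=n-1$, where the only free letter available is forced, and the case of a single even cycle ($c^{*}(g)=1$), where $h=c'$ is itself a single odd cycle, should be checked separately as sanity tests before the general argument.
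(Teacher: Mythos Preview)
The paper does not prove this theorem; it is quoted verbatim from \cite{Bertram 1972} and used as a black box, so there is no in-paper proof to compare against. Your reduction to Theorem~\ref{thm:(Bertram):-Regular even Bertram} via peeling off a transposition is the natural route and is essentially Bertram's own argument.

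That said, you have mislocated the difficulty. The obstacle you flag---that in the weak regime $a_{2k}$ might be absorbed into $C_2$---is harmless: by Remark~\ref{rem:successively} the additional letters in a weak decomposition may be chosen freely, and since $l\le n-1$ there are $n-|\mu(h)|\ge l-|\mu(h)|+1$ candidates, so one can always exclude $a_{2k}$. Once that is done, $a_1\in\mu(h)$ forces $a_1\in\mu(C_1)\cup\mu(C_2)$; if $a_1\in\mu(C_2)$ you are finished, and if $a_1\in\mu(C_1)\setminus\mu(C_2)$ then $t$ commutes past $C_2$ and $C_1t$ is already the required $(l+1)$-cycle (no conjugation trick needed).

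The genuine gap is the case $2k=2$, which you do not single out. If every even cycle of $g$ is a transposition then $c'$ is the identity, so $|\mu(h)|=|\mu(g)|-2$ and $c^{*}(h)=c^{*}(g)-1$ (your stated values are wrong here), and crucially $a_1\notin\mu(h)$. In the strong regime $l\le|\mu(h)|$ one then has $\mu(C_i)\subseteq\mu(h)$, so $t=(a_1,a_2)$ is disjoint from both $C_1$ and $C_2$ and $C_2t$ is \emph{not} a single cycle. A concrete instance is $g=(1\,2)(3\,4\,5)$ with $l=3$: here $h=(3\,4\,5)$, any decomposition $h=C_1C_2$ into $3$-cycles is supported on $\{3,4,5\}$, and $C_2\cdot(1\,2)$ has cycle type $(3,2)$. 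This case needs a separate treatment (e.g.\ first merging the transposition into an adjacent odd cycle before peeling, or handling it directly); as written the proposal is incomplete precisely here, not in the weak regime you highlight.
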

\begin{rem}
\label{rem:successively}
 It is important to note the following from the proof of Theorems
\ref{thm:(Bertram):-Regular even Bertram} and \ref{thm:(Bertram):odd bertram}:
let $g\in A_{n}$ and assume that Theorem \ref{thm:(Bertram):-Regular even Bertram}
holds for some $l$. If $|\mu(g)|\geq l$ then in the proof of Theorem
\ref{thm:(Bertram):-Regular even Bertram}, the cycles $A$ and $B$
are chosen such that $\mu(A),\,\mu(B)\subseteq\mu(g).$ If however
$|\mu(g)|<l,$ then the cycles $A$ and $B$ require additional letters
to those in $\mu(g)$. Precisely, we require $l-|\mu(g)|$ new letters.
It will be important to differentiate between these two cases so we
define $AB$ to be a \textit{strong decomposition} or a \textit{weak
decomposition,} when $|\mu(g)|\geq l$ or $|\mu(g)|<l$, respectively.
Similarly, if $g\in S_{n}\backslash A_{n}$ and Theorem \ref{thm:(Bertram):odd bertram}
holds, then we have a strong or weak decomposition when $|\mu(g)|\geq l+1$
or $|\mu(g)|<l+1$, respectively. Lastly, if $g\in A_{n}$ and $g=x_{1}x_{2}x_{3}$
for some $x_{i}\in I_{p}(A_{n})$ such that $\mu(x_{i})\subseteq\mu(g)$,
then we say $g$ has a strong decomposition in $I_{p}(A_{n})^{3}$.

It is also clear from the proof of Theorem \ref{thm:(Bertram):-Regular even Bertram} (\cite{Bertram 1972},
Thm. 1) that should a decomposition into two $l$-cycles be weak, then any additional letters can be written successively in the elements $A$ and $B$. For example suppose that  $\mu(g)=\{a_{1},\dots,a_{n}\}=\{a_{1}, b_{2},\dots,b_{n}\}$ and
\[ 
g =  (a_{1},\dots ,a_{n})\cdot (b_{2},\dots ,b_{n}, a_{1}).
\]
Then if $o_{1},\dots,o_{l-n}$ are additional letters distinct from  $a_{1},\dots ,a_{n}$  we can extend the decomposition to $l$-cycles
\[
g=(o_{1},\dots ,o_{l-n}, a_{1},\dots ,a_{n})\cdot (b_{2},\dots ,b_{n}, a_{1}, o_{l-n},\dots ,o_{1}). 
\]
\end{rem}
 
Theorems \ref{thm:(Bertram):-Regular even Bertram} and \ref{thm:(Bertram):odd bertram}
use the notation of \cite{Bertram 1972} with permutations denoted
by capital letters. However in this work we shall reserve capital
letters for permutations that are known to be $p$-cycles. These $p$-cycles
will predominantly appear through the application of said theorems.
Furthermore, we shall only apply Theorem \ref{thm:(Bertram):-Regular even Bertram}
or Theorem \ref{thm:(Bertram):odd bertram} with $l=p$ or $l=p-1$,
respectively. This allows us to avoid stating the value of $l$ each
time.

\begin{rem}
\label{rem:sharpness of the alternating groups}Note that Theorem
\ref{thm:(Bertram):-Regular even Bertram} implies that Theorem \ref{thm:my theorem}
is in general best possible: as $\max_{g\in A_{n}}l(g)=\lfloor \frac{3n}{4} \rfloor$, if $2p>n$ and  $\frac{3n-3}{4}>p$ then $A_{n}\neq I_{p}(A_{n})^{2}$.
Rearranging gives that for each fixed $p$, $\pw(A_{n})\geq3$ for $2p>n>\frac{4p+3}{3}$.
Furthermore, when $p\geq5$ this collection of alternating groups
is non-empty. Finally, when $p=3$ we can check by hand that $\pw(A_{5})=3$. 

We will also use the following lemma in the latter parts of the proof.
\end{rem}
\begin{lem}
\label{lem:lemma}\cite[Lem. 2]{Bertram 1972}. Let $a$ and $b$
be single cycles in $S_{n}$. If $|\mu(a)\cap\mu(b)|=m\geq1$ then
$c^{*}(ab)\leq m.$ 
\end{lem}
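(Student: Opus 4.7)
My plan is to show directly that every non-trivial cycle of $ab$ meets $I := \mu(a) \cap \mu(b)$. Since the cycles of $ab$ are pairwise disjoint, this immediately yields $c^*(ab) \leq |I| = m$.

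Fix a non-trivial cycle $C$ of $ab$ and an arbitrary $x \in C$; I will show that iterating $ab$ from $x$ eventually lands in $I$, which is enough since $C$ is closed under $ab$. Recall that the paper's left-to-right convention gives $(ab)(y) = b(a(y))$, so on $\mu(a) \setminus \mu(b)$ one has $(ab)(y) = a(y)$ whenever $a(y) \notin \mu(b)$ (since $b$ then fixes $a(y)$), while on $\mu(b) \setminus \mu(a)$ one has $(ab)(y) = b(y)$ (since $a$ fixes $y$).

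The case analysis is then short. If $x \in I$ there is nothing to do. Suppose $x \in \mu(a) \setminus I = \mu(a) \setminus \mu(b)$. As long as the $ab$-iterates of $x$ stay in $\mu(a) \setminus I$, the previous observation shows they coincide with the $a$-iterates of $x$. Since $a$ is a single cycle on $\mu(a)$ and $I \subseteq \mu(a)$ is non-empty, there is a smallest $k \geq 1$ with $a^{k}(x) \in I$, and the corresponding $ab$-iterate $x_{k} = b(a^{k}(x))$ lies in $\mu(b)$. Either $x_{k} \in I$, and we are done, or $x_{k} \in \mu(b) \setminus I$, which reduces us to the remaining case. Finally, if $y \in \mu(b) \setminus I$, then while the $ab$-iterates of $y$ remain in $\mu(b) \setminus I$ they coincide with the $b$-iterates of $y$; since $b$ is a single cycle meeting $I$, some such iterate lies in $I$. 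In all cases $C \cap I$ is non-empty.

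The argument is essentially bookkeeping; the only subtlety is that the orbit may zig-zag between $\mu(a) \setminus I$ and $\mu(b) \setminus I$ by passing through $I$, and one must handle both regimes consistently. The essential structural input is that $a$ and $b$ are single cycles, so their restrictions to the complements of $I$ in their own supports cannot avoid $I$ indefinitely.
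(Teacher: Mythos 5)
Your argument is correct. Note first that the paper itself gives no proof of this lemma: it is imported verbatim as Lemma 2 of Bertram's 1972 paper, so there is no in-paper argument to compare against, and what you have written is a self-contained elementary substitute. The structure is sound: every non-trivial cycle of $ab$ lies inside $\mu(a)\cup\mu(b)$, and your orbit-tracing shows each such cycle meets $I=\mu(a)\cap\mu(b)$, whence disjointness of the cycles gives $c^{*}(ab)\leq m$. The two regimes are handled correctly under the left-to-right convention: from $x\in\mu(a)\setminus I$ the $ab$-iterates agree with the $a$-iterates until the first $a^{k}(x)\in I$ (here you use that $a^{j}(x)\in\mu(a)$ always, so ``not in $I$'' is the same as ``not in $\mu(b)$''), after which the orbit lands at $b(a^{k}(x))\in\mu(b)$; and from $y\in\mu(b)\setminus I$ the iterates agree with the $b$-iterates and cannot leave $\mu(b)$ without entering $I$, so the zig-zag terminates after at most one transition. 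The only point worth stating explicitly if you write this up is the base observation that the support of any non-trivial cycle of $ab$ is contained in $\mu(a)\cup\mu(b)$, so your trichotomy $I\cup(\mu(a)\setminus I)\cup(\mu(b)\setminus I)$ is exhaustive; with that said, the proof is complete.
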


\section{Proof of main result} \label{sec:Main Sec}
\subsection{Overview of the proof of Theorem \ref{thm:my theorem}} \label{subsec:overview}

Let $\sigma\in A_{n}$ and consider the disjoint cycle decomposition
\begin{equation}
\sigma=\prod_{i=1}^{k_{a}}a_{i}\prod_{j=1}^{k_{c}}c_{j}\prod_{s=1}^{k_{b}}b_{s}\prod_{t=1}^{k_{d}}d_{t},\label{eq:disjoint decomp of our initial element}
\end{equation}
 where $a_{i}$, $c_{j}$ are cycles of odd length such that $|\mu(c_{j})|<p\leq|\mu(a_{i})|$
and the collections $\{b_{s}\}$ and $\{d_{t}\}$ consist of even
length cycles. These collections are labeled such that $k_{b},k_{d}$
are even (we want $\prod_{s=1}^{k_{b}}b_{s},\,\prod_{t=1}^{k_{d}}d_{t}\in A_{n}$)
: let $\{b_{s}\}$ denote the cycles of even length greater than $p$,
plus a single cycle of even length less than $p$ if there an odd
number of these in $\sigma$. Finally let $\{d_{t}\}$ denote the
set of remaining cycles of even length less than $p$.

We shall prove that $\pw(\sigma)\leq3$ in the following steps:
\begin{enumerate}
\item We show that for any cycle $a_{i}$, $\pw(a_{i})\leq3$. Furthermore,
$a_{i}$ has a strong decomposition in $I_{p}(A_{n})^{3}$.
\item We show that $\pw(b_{s}b_{s+1})\leq3$ for any pair $b_{s}b_{s+1}$
such that $b_{s}$ has length greater than $p$. Furthermore, $b_{s}b_{s+1}$
has a strong decomposition in $I_{p}(A_{n})^{3}$. 
\item Consider the sub-element $g=\prod_{j=1}^{k_{c}}c_{j}\prod_{t=1}^{k_{d}}d_{t}$
of $\sigma$, and assume that $(|\mu(g)|+c^{*}(g))/2\leq p$. If $\sigma=g$
then we show that $\pw(g)\leq2$. If $\sigma\neq g$ then we show
that $\pw(gh)\leq3$ for any non-trivial sub-element $h$ of $\prod_{i=1}^{k_{a}}a_{i}\prod_{j=1}^{k_{c}}c_{j}$.
In the latter case, $gh$ has a strong decomposition in $I_{p}(A_{n})^{3}$.
Note that Part (3) relies on the work of Parts (1) and (2) above.
\item Consider the sub-element $g=\prod_{s=1}^{k_{c}}c_{s}\prod_{t=1}^{k_{d}}d_{t}$
of $\sigma$, and assume that $(|\mu(g)|+c^{*}(g))/2>p$. We show
that we can apply either strong Theorem \ref{thm:(Bertram):-Regular even Bertram}
or strong Theorem \ref{thm:(Bertram):odd bertram} repeatedly to sub-elements
of $g$. We construct $\op$-elements during this process to show $\pw(g)\leq3$
and that $g$ has a strong decomposition in $I_{p}(A_{n})^{3}$.
\item The result follows from (1)-(4) as all the $a_{i},\,b_{s},\,c_{j},\,d_{t}$
commute and we group disjoint $\op$-elements into 3 families as required.
\end{enumerate}

\subsection{Cycles in $\{a_{i}\}$\label{sec:ai cycles}}

In this section we consider cycles in $\{a_{i}\}_{i=1}^{k_{a}}$ and
prove the following Lemma:
\begin{lem}
\label{lem:a cycles lemma}Let $a\in A_{n}$ be single cycle of odd
length of at least $p$. Then $\pw(a)\leq3$ and $a$ has a strong
decomposition in $I_{p}(A_{n})^{3}$. Furthermore, the strong decomposition
of $a$ contains free letters in the following amounts:

\begin{enumerate}
\item if $|\mu(a)|\geq5p-4$ - at least $2(p-2)$ free letters;
\item if $2p\leq|\mu(a)|\leq3p-1$ - at least $p-1$ free letters;
\item if $4p-2\leq|\mu(a)|\leq5p-5$ - at least $2(p-1)$ free letters;
\item if $3p\leq|\mu(a)|\leq4p-3$ - at least $2(p-1)$ free letters;
\item if $p\leq|\mu(a)|\leq2p-1$ - no minimum number of free letters.
\end{enumerate}
\end{lem}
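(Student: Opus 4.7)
The plan is a case analysis on $m := |\mu(a)|$ using the peel-and-Bertram template: given $k$ pairwise-disjoint contiguous $p$-cycle blocks $c^{(j)} = (a_{i_j}, \dots, a_{i_j + p - 1})$ inside $a = (a_1, \ldots, a_m)$, set $\alpha = \prod_j c^{(j)} \in I_p(A_n)$ and $\beta := \alpha^{-1} a$. An iterative calculation (starting with $k = 1$, then noting that after peeling one block the others remain contiguous in the shortened cycle) shows $\beta$ is a single cycle of length $m - k(p-1)$ supported on $\mu(a) \setminus \bigcup_j \{a_{i_j + 1}, \ldots, a_{i_j + p - 1}\} \subseteq \mu(a)$. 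Whenever $p \leq |\mu(\beta)| \leq 2p - 1$, strong Theorem \ref{thm:(Bertram):-Regular even Bertram} with $l = p$ factors $\beta = BC$ with $p$-cycles inside $\mu(\beta)$, giving the strong decomposition $a = \alpha \cdot B \cdot C \in I_p(A_n)^3$; the $k(p-1)$ block-tail letters $\bigcup_j \{a_{i_j + 1}, \ldots, a_{i_j + p - 1}\}$ are moved only by $\alpha$ and are therefore free.

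Case (5) is handled by $k = 0$ (strong Theorem \ref{thm:(Bertram):-Regular even Bertram} applied directly to $a$, padded with the identity as the third factor). Cases (2), (4) and (3) use $k = 1, 2, 3$ respectively, with blocks placed at $(a_1, \ldots, a_p)$, additionally $(a_{2p-1}, \ldots, a_{3p-2})$, and additionally $(a_{3p-1}, \ldots, a_{4p-2})$: the case hypothesis on $m$ guarantees both that these blocks fit pairwise-disjointly in $\mu(a)$ and that $|\mu(\beta)| = m - k(p-1)$ lies in $[p, 2p - 1]$, yielding $p-1$, $2(p-1)$, and $3(p-1) \geq 2(p-1)$ free letters respectively. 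For case (1) ($m \geq 5p - 4$) take the smallest $k \geq 3$ with $m - k(p-1) \leq 2p - 1$; as long as $kp \leq m$ the blocks pack into $\mu(a)$, and $k(p-1) \geq 3(p-1) \geq 2(p-2)$ free letters result. In the residual range where direct packing is infeasible (i.e.\ $m$ very large relative to $p$), one first applies strong Theorem \ref{thm:(Bertram):-Regular even Bertram} to $a$ with some $l > p$, obtaining $a = A \cdot B$ with $A, B$ shorter cycles meeting in a single letter, and then combines recursively-obtained decompositions of $A$ and $B$ into three $\op$-elements by using the shared letter to absorb one pair of the resulting $p$-cycles into a single $\op$-element.

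The technical heart is the single-block cycle calculation $\mu(\alpha^{-1} a) = \mu(a) \setminus \{a_{i+1}, \ldots, a_{i+p-1}\}$ and its extension to several disjoint contiguous blocks, where one must verify that $\beta$ is a single cycle (not a union) and that the fixed letters are exactly the block tails. The most delicate part is the recursive merger argument in case (1) for the largest values of $m$: combining the $p$-cycles coming from the two halves of an initial Bertram split into only three $\op$-elements requires exploiting both the unique shared letter of $A$ and $B$ and the disjointness structure within each half, and this is the principal obstacle to checking the decomposition uniformly in $m$.
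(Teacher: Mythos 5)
Your peeling template is sound as far as it goes: the computation $\bigl(\prod_j c^{(j)}\bigr)^{-1}a$ being a single cycle that fixes exactly the $k(p-1)$ block-tail letters is correct, and for Cases (2)--(5) (where $k\leq 3$ blocks suffice and $|\mu(a)|$ odd keeps the remainder length in $[p,2p-1]$) this gives a clean, correct proof that is close in spirit to, and arguably tidier than, the paper's explicit constructions. The gap is in Case (1) for large $|\mu(a)|$. Each peeled block consumes $p$ letters of $\mu(a)$ but shortens the remainder by only $p-1$, so the requirement $kp\leq m$ together with $m-k(p-1)\leq 2p-1$ forces roughly $m\leq p^{2}$; for example with $p=3$ and $m=17$ one needs $k=6$ disjoint blocks occupying $18>17$ letters, while $k=5$ leaves a remainder of length $7>2p-1$. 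So the ``residual range'' you defer is not a fringe case: it contains all sufficiently long cycles, for every $p$.

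Your proposed fix for that range does not work as stated. Applying Theorem \ref{thm:(Bertram):-Regular even Bertram} to $a$ with $l>p$ gives $l$-cycles $A,B$ which need not be even permutations (for $m\equiv 3\pmod 4$ the minimal $l=(m+1)/2$ is even), in which case neither admits any decomposition into $\op$-elements and the recursion cannot start; choosing $l$ odd can force $|\mu(A)\cap\mu(B)|=2l-m\geq 3$. Even with a single shared letter, merging the six resulting $\op$-elements into three fails at that letter: the two $p$-cycles containing it multiply to a $(2p-1)$-cycle, not an element of order $p$, and the interleaved factors $x_1x_2x_3\bar{x}_1\bar{x}_2\bar{x}_3$ cannot simply be regrouped because the factors meeting the shared letter do not commute past the others. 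The paper avoids all of this by distributing the peeled $p$-cycles across \emph{all three} factors rather than concentrating them in one: each $\op$-element is a product of roughly $m/(2p)$ pairwise disjoint $p$-cycles (see the displayed decomposition of $(1\,2\,3\dots 47)$ for $p=5$), so the coverage scales linearly in $m$ and only a bounded remainder $r$ with $|\mu(r)|\leq 3p-2$ is left for Theorem \ref{thm:(Bertram):-Regular even Bertram}. To repair your argument you would need to replace the single $\alpha$ by three such products with an explicit interleaving pattern, which is essentially the paper's Case (1).
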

\begin{proof}
Without loss of generality let $a=(1\dots n)$, where $n$ is odd.
Naturally if $n=p$ then the Lemma is immediate and hence we assume
that $n>p$. To allow for easy referencing in later material, we break
the proof into the cases $(1)-(5)$ as specified in the statement
of the Lemma. 

\textbf{Case (1): }Assume that $n\geq5p-4.$ For illustrative purposes
consider the following example when $p=5$ and $n=47:$ 
\begin{eqnarray*}
(1\,2\,3\dots47) & = & (1\dots5)(13\dots16\,47)(20\dots23\,46)(27\dots30\,45)(34\dots37\,44)\\
 & \times & (1\,6\dots9)(17\dots20\,47)(24\dots27\,46)(31\dots34\,45)\\
 & \times & (1\,10\,11\,12\,13)\\
 & \times & (38\dots44).
\end{eqnarray*}
 More generally we decompose $a$ as follows: let $m:=n-(3p-2)$ and
choose $l$ such that $p-1\leq l<3p-2$ and $m=c\cdot(2p-2)+(p-1)+l$,
where $c\in\mathbb{N}$. Note this is possible as $n\geq5p-4$. We
then have that
\begin{eqnarray*}
(1\dots n) & = & (1\dots p)\prod_{i=0}^{c}((3p-2)+i(2p-3),\dots,(4p-4)+i(2p-3),n-i))\\
 & \times & (1\,p+1\dots2p-1)\\
 &  & \prod_{i=1}^{c}((4p-3)+(i-1)(2p-3),\dots,(5p-5)+(i-1)(2p-3),n-i+1))\\
 & \times & (1\,2p\dots3p-2)\\
 & \times & (n-c-l,\dots,n-c).
\end{eqnarray*}

Denote the collections of cycles on the 1st, 2nd and 3rd, and 4th
rows by $x_{1}=s_{1}\dots s_{c+2},$  $x_{2}=t_{1}\dots t_{c+1}$ and
$x_{3}=u_{1}$ respectively, and denote the final cycle (of length
$l+1$) by $r$. In summary 
\[
a=x_{1}x_{2}x_{3}\cdot r,
\]
where $x_{1},x_{2},x_{3}\in I_{p}(A_{n})$ and $r$ is the ``remainder''
of this construction. 

Note that $\mu(r)\cap\mu(x_{2})=\mu(r)\cap\mu(x_{3})=\emptyset$ and
$\mu(r)\cap\mu(x_{1})=\{n-c\}$. Hence if $|\mu(r)|=p$ it follows
that $a=x_{1}\cdot x_{2}\cdot(x_{3}r)\in I_{p}(A_{n})^{3}$ and that this
is a strong decomposition into $\op$-elements.

Now assume that $p<|\mu(r)|<2p$. It follows that 
\[
\frac{|\mu(r)|+c^{*}(r)}{2}\leq p
\]
 and we apply strong Theorem \ref{thm:(Bertram):-Regular even Bertram}
to write $r$ as a product of two $p$-cycles $r=E_{1}E_{2}$. Thus
there exists a strong decomposition
\[
a=x_{1}\cdot x_{2}E_{1}\cdot x_{3}E_{2}\in I_{p}(A_{n})^{3}.
\]
 The remaining case to consider is where $2p\leq|\mu(r)|\leq3p-2$:
we first split $r$ into 2 cycles, $r=e_{1}E_{2}$ where $|\mu(E_{2})|=p$,
$|\mu(e_{1})|=l-p+2$ and $\mu(e_{1}),\mu(E_{2})\subset\mu(r)$. These
cycles can be chosen such that $n-(c-2)\in\mu(E_{2})$ and $n-(c-2)\notin\mu(e_{1})$. 

\textit{Remark: }This is a method we apply throughout this paper
and we refer to it as \textit{splitting off a $p$-cycle}. Specifically,
if $o=(o_{1},\dots,o_{l})\in S_{n}$ is a single cycle of length $l>p$,
then we split off a $p$-cycle by writing
\[
(o_{1},\dots,o_{l})=(o_{1},\dots,o_{l-p+1})(o_{1},o_{l-p+2},\dots,o_{l})=:o'\cdot P.
\]

Clearly $\mu(o'),\mu(P)\subset\mu(o)$ and |$\mu(o')\cap\mu(P)|=1$,
where the letter in this intersection can be chosen from any in $\mu(o)$.
Note that $|\mu(o')|=l-p+1$.

Now as $p+1\leq|\mu(e_{1})|\leq2p-1$ we use strong Theorem \ref{thm:(Bertram):-Regular even Bertram}
 to write $e_{1}$ as a product of two $p$-cycles, say $e_{1}=F_{1}F_{2}$.
Clearly $\mu(F_{i})\cap(\mu(x_{1})\cup\mu(x_{2})\cup\mu(x_{3}))=\emptyset$.
Hence cycles commute where necessary to yield a strong decomposition
\[
a=x_{1}x_{2}x_{3}r=x_{1}F_{1}\cdot x_{2}F_{2}\cdot x_{3}E_{2}\in I_{p}(A_{n})^{3}.
\]
 In summary, if $|\mu(a)|\geq5p-4$ then there exists a strong decomposition
$a=y_{1}y_{2}y_{3}\in I_{p}(A_{n})^{3}$. Furthermore, it is clear from the above construction of $y_{i}\in I_{p}(A_{n})$, that there exist at
least $p-2$ free letters in each of the sets $\mu(y_{1})\backslash(\mu(y_{2})\cup\mu(y_{3}))$
and $\mu(y_{2})\backslash(\mu(y_{1})\cup\mu(y_{3}))$. This completes
the proof of Lemma \ref{lem:a cycles lemma} Case (1).

\textbf{Case (2): }Assume that $2p\leq n\leq3p-1$. This is addressed
exactly as in case (1) by letting $a=r$: we write a strong decomposition
$a=(1\dots n)=F_{1}F_{2}E_{2}\in I_{p}(A_{n})^{3}$ and note that $\mu(E_{2})\cap(\mu(F_{1})\cup\mu(F_{2}))$
is a singleton set. Hence the decomposition of $a$ contains $p-1$
free letters in the cycle $E_{2}$.

\textbf{Case (3):} Assume that $4p-3<n<5p-4$. Firstly note that 
\begin{eqnarray*}
(1\dots n) & = & (1\dots p)(1\,p+1\dots2p-1)\\
 & \times & (2p\dots n-p+2)(2p,\,n-p+3\dots n-1,\,n,\,1).
\end{eqnarray*}
 Let $g:=(2p\dots n-p+2)$. By assumption, $p<|\mu(g)|<2p-1$ and
thus we can apply strong Theorem \ref{thm:(Bertram):-Regular even Bertram}
to write $g$ as a product of two $p$-cycles, $g=AB$. Furthermore
we can assume that $B$ fixes $2p$. Thus labeling
\begin{eqnarray*}
y_{1} & = & (1\dots p)\\
y_{2} & = & (1\,p+1\dots2p-1)A\\
y_{3} & = & B(2p\,n-p+3\dots n-1\,n\,1)
\end{eqnarray*}
 yields a strong decomposition $(1\dots n)=y_{1}y_{2}y_{3}\in I_{p}(A_{n})^{3}$.
Note that $y_{1}$ and $y_{2}$ both contain $p-1$ distinct free
letters, completing the proof of Lemma \ref{lem:a cycles lemma} Case
(3).

\textbf{Case (4):} Assume that $3p\leq n\leq4p-3$ and first write
\[
(1\dots n)=(1\dots p)(1\,p+1\dots2p-1)(1\,2p\dots n).
\]
Labeling the final cycle by $g$, it follows that $p+2\leq|\mu(g)|\leq2p-1$
and we again apply strong Theorem \ref{thm:(Bertram):-Regular even Bertram}
to write $g=AB$, a product of two $p$-cycles. We assume without
loss of generality that $1\notin\mu(A)$ and thus 
\[
(1\dots n)=(1\dots p)\cdot(1\,p+1\dots2p-1)A\cdot B\in I_{p}(A_{n})^{3},
\]
is a strong decomposition satisfying Lemma \ref{lem:a cycles lemma}
Case (4). 

\textbf{Case (5):} Assume that $p<n\leq2p-1.$ Here a simple application
of strong Theorem \ref{thm:(Bertram):-Regular even Bertram} will
suffice and we can write $a=AB$ the product of two $p$-cycles. 

This completes the proof of Lemma \ref{lem:a cycles lemma}. 
\end{proof}

\subsection{Pairs of cycles in $\{b_{i}\}$\label{sec:Pairs-of-cycles}}

In this section we show that $\pw(bb')\leq3$ for
any pair of cycles $b,b'\in\{b_{s}\}_{s=1}^{k_{b}}$ (see Lemma
\ref{lem:b cycles lemma} below). Recall that these cycles have even
length and without loss of generality, $|\mu(b)|>p$. In general
we aim to write $bb'$ as a product of odd length cycles and
then apply the methods in Section \ref{sec:ai cycles}. Furthermore, we follow a similar procedure in considering various cases
dependent on the size of $\mu(bb')$.
\begin{lem}
\label{lem:b cycles lemma}Let $b_{1}b_{2}\in A_{n}$ be a pair of
cycles of even lengths such that $|\mu(b_{1})|>p$. Then $\pw(b_{1}b_{2})\leq3$
and $b_{1}b_{2}$ has a strong decomposition in $I_{p}(A_{n})^{3}$. Furthermore,
the strong decomposition of $b_{1}b_{2}$ contains free letters in
the following amounts:
\end{lem}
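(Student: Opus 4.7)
The proof closely parallels the structure of Lemma \ref{lem:a cycles lemma}, but must overcome the fact that $b_{1},b_{2}$ have even length while the $\op$-elements we seek are built from $p$-cycles of odd length. As the preamble to the subsection indicates, the unifying idea is to first rewrite $b_{1}b_{2}$ as a product of odd-length cycles, then invoke Lemma \ref{lem:a cycles lemma} or a direct strong application of Theorem \ref{thm:(Bertram):-Regular even Bertram} on the pieces produced.

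The natural plan is a case analysis on the pair $(|\mu(b_{1})|,|\mu(b_{2})|)$, mirroring the five cases of Lemma \ref{lem:a cycles lemma}. In the smallest range, where $l(b_{1}b_{2}) = (|\mu(b_{1})| + |\mu(b_{2})| + 2)/2 \leq p$, strong Theorem \ref{thm:(Bertram):-Regular even Bertram} applies directly to give $b_{1}b_{2}=AB$ as a product of two $p$-cycles with supports in $\mu(b_{1}b_{2})$; appending the identity yields a strong triple decomposition and the free-letter count falls out of the lengths of $A$ and $B$. In the intermediate ranges, I would split off a $p$-cycle $P$ from $b_{1}$ using the procedure of the Remark in Case (1) of Lemma \ref{lem:a cycles lemma}, writing $b_{1}=b_{1}'P$ with $|\mu(b_{1}')\cap\mu(P)|=1$. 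Since $P$ is disjoint from $b_{2}$, rearranging gives $b_{1}b_{2}=(b_{1}'b_{2})\cdot P$, and the residual factor $b_{1}'b_{2}$ has $l$-value smaller by $(p-1)/2$. For a suitable subinterval of lengths this drops the $l$-value to at most $p$, so strong Theorem \ref{thm:(Bertram):-Regular even Bertram} supplies two further $p$-cycles $A,B$ and one reads off the strong decomposition $b_{1}b_{2}=A\cdot B\cdot P\in I_{p}(A_{n})^{3}$. When $|\mu(b_{2})|>p$ as well, I would also split off a $p$-cycle $P'$ from $b_{2}$; then $P,P'$ are disjoint, so $PP'\in I_{p}(A_{n})$, and the leftover $b_{1}'b_{2}'$ is handled analogously.

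For the largest cases, where $|\mu(b_{1})|$ grows beyond the next threshold determined by $p$, the strategy is to iterate the splitting procedure on $b_{1}$ exactly as in Case (1) of Lemma \ref{lem:a cycles lemma}: peel off successive disjoint $p$-cycles from $b_{1}$ until the remainder is small enough for a final application of strong Theorem \ref{thm:(Bertram):-Regular even Bertram}, then distribute the peeled $p$-cycles across at most three disjoint families, whose products then lie in $I_{p}(A_{n})$. Every $p$-cycle used is supported in $\mu(b_{1}b_{2})$, so the triple decomposition is strong, and the free-letter counts are tracked additively through the iteration.

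The principal obstacle is the bookkeeping at the joins. Each split introduces a unique overlap letter between the peeled $p$-cycle and the residual, and the two $p$-cycles that the final Bertram application produces on the residual must be arranged to avoid the already-extracted $P$, $P'$, and any earlier splits. This is the same technical difficulty encountered in Case (1) of Lemma \ref{lem:a cycles lemma}, and is resolved by choosing overlap letters carefully and, when Bertram's decomposition is weak, invoking Remark \ref{rem:successively} to absorb additional letters into the two $p$-cycles produced. Once this is set up, the free-letter table (the unstated tail of the statement) follows case by case by summing the free letters contributed by each extracted $p$-cycle, in direct analogy with the five-case tally in Lemma \ref{lem:a cycles lemma}.
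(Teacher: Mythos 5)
Your small and intermediate cases track the paper's argument closely: splitting off a $p$-cycle $P$ from $b_{1}$ (and from $b_{2}$ when $|\mu(b_{2})|>p$), applying strong Theorem \ref{thm:(Bertram):-Regular even Bertram} to the residual, and taking any letters needed for a weak application from $\mu(P)$ is exactly how the paper handles its Cases (2), (3) and (4). The gap is in your treatment of the large case. You propose to iterate the peeling on $b_{1}$ and then ``distribute the peeled $p$-cycles across at most three disjoint families,'' but this does not work as stated. Each peel $b_{1}^{(i)}=b_{1}^{(i+1)}P_{i+1}$ forces a join letter shared between $P_{i+1}$ and the residual; for all the peeled $p$-cycles to be pairwise disjoint (so that they form a single element of $I_{p}(A_{n})$), all $k$ join letters must be distinct and survive into the final residual, which requires $|\mu(b_{1})|\geq kp$ while the stopping condition forces $k\approx(|\mu(b_{1})|+|\mu(b_{2})|-2p)/(p-1)$. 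These are incompatible once $|\mu(b_{1})|$ exceeds roughly $p\bigl(2p-2-|\mu(b_{2})|\bigr)$, so for long cycles the peeled $p$-cycles cannot all be collected into one family; and once they overlap they no longer commute, so ``distributing into three families'' requires exactly the explicit interleaved construction of Lemma \ref{lem:a cycles lemma} Case (1) --- which is built for odd cycles and which you do not redo for even ones.

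The idea you state in your preamble but then abandon is the one the paper actually uses: write $b_{1}=(1\,3\dots n)(2\,3)$ and $b_{2}=(\bar{1}\,\bar{3}\dots\bar{n})(\bar{2}\,\bar{3})$, apply Lemma \ref{lem:a cycles lemma} to the two \emph{odd} cycles to get $x_{1}x_{2}x_{3}\bar{x}_{1}\bar{x}_{2}\bar{x}_{3}$, and then absorb the leftover $(2\,3)(\bar{2}\,\bar{3})$. That leftover is the crux your proposal never confronts: it is an even-support element that becomes a product of two $p$-cycles only after adjoining $p-4$ further letters, and these must be borrowed from the free letters that Lemma \ref{lem:a cycles lemma} guarantees inside $x_{1}$ (this is the entire reason that lemma tracks free-letter counts), with the two resulting $p$-cycles then placed in the factors containing $x_{2}\bar{x}_{2}$ and $x_{3}\bar{x}_{3}$ so that disjointness within each factor is preserved. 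Without this step your argument has no mechanism for disposing of the even-length remainder in the large case, and the free-letter tallies you defer to ``summing contributions'' cannot be verified.
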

\begin{enumerate}
\item if $|\mu(b_{1})|\geq2p+2$ and $|\mu(b_{2})|>p$ - at least $p-2$
free letters;
\item if $p<|\mu(b_{i})|\leq2p$ - at least $p-1$ free letters;
\item if $|\mu(b_{1})|\geq2p$ and $|\mu(b_{2})|<p$ - at least $p-2$ free
letters; 
\item if $p<|\mu(b_{1})|\leq2p-2$ and $|\mu(b_{2})|<p$ - no minimum number
of free letters.
\end{enumerate}
\begin{proof}
\textbf{Case (1): }First consider a pair of cycles $b_{1}b_{2}$ such
that $|\mu(b_{1})|\geq2p+2$ and $|\mu(b_{2})|>p$. 

Without loss of generality let $b_{1}=(1\dots n)$ and $b_{2}=(\bar{1}\dots\bar{n})$.
These can be written as a product of a 2-cycle and an odd-cycle with
length at least $p$ as follows:
\[
b_{1}=(1\,3\dots n)(2\,3)
\]
and 
\[
b_{2}=(\bar{1}\,\bar{3}\dots\bar{n})(\bar{2}\,\bar{3}).
\]
 By Lemma \ref{lem:a cycles lemma}, the odd-length cycles can be
written as strong products of at most $3$ $\op$-elements (denote
these $x_{i}$ and $\overline{x}_{i}$) to give the decomposition
\[
b_{1}b_{2}=x_{1}x_{2}x_{3}\bar{x}_{1}\bar{x}_{2}\bar{x}_{3}(2\,3)(\bar{2}\,\bar{3}).
\]
 As the $x_{i}$ and $\bar{x}_{i}$ contain distinct letters, they
pairwise commute to give 
\begin{equation}
b_{1}b_{2}=(x_{1}\bar{x}_{1})(x_{2}\bar{x}_{2})(x_{3}\bar{x}_{3})(2\,3)(\bar{2}\,\bar{3}),\label{eq:decomp}
\end{equation}
 where $x_{i}\overline{x}_{i}\in I_{p}(A_{n})$. The element $(2\,3)(\bar{2}\,\bar{3})$
can be easily written as a product of two $p$-cycles but we require
$p-4$ new letters, call these $o_{i}$. e.g 
\begin{equation}
(2\,3)(\bar{2}\,\bar{3})=(o_{1}\dots o_{p-4}\bar{3}\,3\,\bar{2}\,2)(3\,\bar{2}\,2\,\bar{3}\,o_{p-4}\dots o_{1}).\label{eq:(23)(23) construction}
\end{equation}
 The statement of Lemma \ref{lem:a cycles lemma} (and its proof)
show where to find these free letters. In particular, as $|\mu(b_{1})|\geq2p+2$,
the $\op$-element $x_{1}$ in (\ref{eq:decomp}) contains at least
$p-1$ letters not contained in any other $\mu(x_{i})$ or $\mu(\bar{x}_{i})$.
Excluding possibly the letter $3$, this leaves $p-2$ free letters
to use in the construction (\ref{eq:(23)(23) construction}). In summary
we can find appropriate $o_{1},\dots,o_{p-4}\in\mu(x_{1})\backslash(\mu(x_{2})\cup\mu(\bar{x}_{2})\cup\mu(x_{3})\cup\mu(\bar{x}_{3})\cup\{3\})$.
Commuting cycles where necessary yields
\[
b_{1}b_{2}=(x_{1}\bar{x}_{1})(x_{2}\bar{x}_{2}(o_{1}\dots o_{p-4}\bar{3}\,3\,\bar{2}\,2))(x_{3}\bar{x}_{3}(3\,\bar{2}\,2\,\bar{3}\,o_{p-4}\dots o_{1})).
\]
Hence $\pw(b_{1}b_{2})\leq3$, and this is clearly a strong decomposition
in $I_{p}(A_{n})^{3}$. 

\textit{Remark:} To complete the proof of Lemma \ref{lem:b cycles lemma}
Case (1) we need to show that free letters exist in the strong decomposition
of $b_{1}b_{2}$. This is clear by Lemma \ref{lem:a cycles lemma}
Cases (1)-(4) if either $|\mu(b_{1})|>3p-1$ or $|\mu(b_{2})|>2p$:
there are at least $p-2$ free letters in $x_{2}$ or $\overline{x}_{1}$ respectively.
If $|\mu(b_{1})|\leq3p-1$ and $|\mu(b_{2})|\leq2p$ then we claim
that there also exists a strong decomposition of $b_{1}b_{2}\in I_{p}(A_{n})^{3}$
that contains free letters, but it is not immediate from Lemma \ref{lem:a cycles lemma}.
This claim is addressed by Case (1') below, where we take a different
approach to re-prove that $b_{1}b_{2}\in I_{p}(A_{n})^{3}$ while making
it clear where the free letters are available.

\textbf{Case (1'): }Consider $b_{1}b_{2}$ where $2p+2\leq|\mu(b_{1})|\leq3p-1$
and $p+1\leq|\mu(b_{2})|\leq2p$. Firstly split off a $p$-cycle from
both $b_{1}$ and $b_{2}$ (see Remark in the proof of Lemma
\ref{lem:a cycles lemma} Case (1)) to give $b_{1}b_{2}=P_{1}P_{2}b'_{1}b'_{2}$
such that $|\mu(b_{i})\cap\mu(P_{i})|=1$. Clearly $|\mu(b'_{1}b'_{2})|\geq p$
and if $|\mu(b'_{1}b'_{2})|+c^{*}(b'_{1}b'_{2})\leq2p$ then we apply
strong Theorem \ref{thm:(Bertram):-Regular even Bertram} to give
$b'_{1}b'_{2}=AB\in I_{p}(A_{n})^{2}$. It then follows that $b_{1}b_{2}=P_{1}P_{2}\cdot A\cdot B\in I_{p}(A_{n})^{3}$
is a strong decomposition and there exist $p-1$ free letters in the
cycle $P_{1}$. If instead $|\mu(b'_{1}b'_{2})|+c^{*}(b'_{1}b'_{2})>2p$,
the result follows in a similar manner after splitting of an additional
$p$-cycle from $b'_{1}$ (this is always possible as $2p+2\leq|\mu(b_{1})|$)
and then also from $b'_{2}$ if necessary. In particular if $b_{i}=P_{i}P'_{i}b''_{i}$
then $|\mu(b''_{1}b''_{2})|+c^{*}(b''_{1}b''_{2})\leq p+5$ and hence
we can apply strong Theorem \ref{thm:(Bertram):-Regular even Bertram}
if $p\geq5$. The case $p=3$ can be checked by hand. In summary it
is clear in all cases that $\pw(b_{1}b_{2})\leq3$ and $P_{1}$ contains
$p-1$ free letters, completing the proof of Lemma \ref{lem:b cycles lemma}
Case (1).

\textbf{Case (2): }Consider $b_{1}b_{2}$ where both cycles have length
$p<|\mu(b_{i})|\leq2p$. We cannot immediately use Theorem \ref{thm:(Bertram):-Regular even Bertram}
so first split off a $p$-cycle from each element, rewriting 
\[
b_{1}b_{2}=b_{1}^{'}b_{2}^{'}P_{1}P_{2},
\]
 where $|\mu(b_{i})\cap\mu(P_{i})|=1$ and $4\leq|\mu(b'_{1}b'_{2})|\leq2p+2$.
If $|\mu(b'_{1}b'_{2})|\leq2p-2$ then apply Theorem \ref{thm:(Bertram):-Regular even Bertram}
to write $b'_{1}b'_{2}$ as a product of two $p$-cycles $AB$. This
may require extra letters if $|\mu(b'_{1}b'_{2})|<p$ but these can
be taken from $\mu(P_{1}P_{2})\backslash\mu(b_{1}^{'}b_{2}^{'})$
if necessary. Consequently, $b_{1}b_{2}=A\cdot B\cdot(P_{1}P_{2})\in I_{p}(A_{n})^{3}$
and $\mu(P_{i}),\mu(A),\mu(B)\subseteq\mu(b_{1}b_{2})$.  

This leaves the cases where $|\mu(b'_{1}b'_{2})|=2p$ or $2p+2$.
But recall that $p<|\mu(b_{i})|\leq2p$ and so $|\mu(b'_{i})|=|\mu(b_{i})|-p+1\leq p+1$.
It follows without loss of generality that $|\mu(b'_{2})|=p+1$ and
hence it is possible split off a further $p$-cycle $P_{2}'$ such
that $b'_{2}=b''_{2}P'_{2}$, $\mu(P'_{2})\cap\mu(P_{2})=\emptyset$
and $b''_{2}=(a\,c)$ for some $c\in\mu(P'_{2}).$ Noting that $|\mu(b'_{1}b''_{2})|=p+1$
or $p+3$, we apply Theorem \ref{thm:(Bertram):-Regular even Bertram}
for a strong decomposition $b'_{1}b''_{2}=AB$ and thus $b_{1}b_{2}=A\cdot B\cdot(P_{1}P'_{2}P_{2})\in I_{p}(A_{n})^{3}$.  It is clear in each decomposition that $P_{2}$ contains $p-1$ free letters.

Cases (1) and (2) give a complete treatment of cycle pairs $b_{1}b_{2}$
such that $|\mu(b_{i})|>p$. It therefore remains to consider a pair
of even length cycles $b_{1}b_{2}$ such that $|\mu(b_{2})|<p<|\mu(b_{1})|$. 

\textbf{Case (3): }Assume that $2p\leq|\mu(b_{1})|$.  Without loss of generality let
$b_{1}=(1\dots n)=(1\dots n-1)(1\,n)$. The cycle $(1\dots n-1)$ has
odd length at least $2p-1$ and so by Lemma \ref{lem:a cycles lemma}
there exists a strong decomposition $(1\dots n-1)=y_{1}y_{2}y_{3}$
for some $y_{1},y_{2},y_{3}\in I_{p}(A_{n})$. Furthermore, $y_{1}$ contains
$p-1$ free letters. Now note that $|\mu((1\,n)b_{2})|\leq p+1$ and hence
we can apply Theorem \ref{thm:(Bertram):-Regular even Bertram}. 

If $|\mu((1\,n)b_{2})|=p+1$ then this produces a strong decomposition
$(1\,n)b_{2}=AB$. If instead $|\mu((1\,n)b_{2})|<p+1$ then Theorem \ref{thm:(Bertram):-Regular even Bertram}
yields a weak decomposition requiring at most $p-4$ letters. But
these can be taken easily from $y_{1}$ and thus we write $b_{1}b_{2}=y_{1}y_{2}y_{3}(1\,n)b_{2}=y_{1}\cdot y_{2}A\cdot y_{3}B\in I_{p}(A_{n})^{3}$,
a strong decomposition. 

As in Case (1), it is clear that the conclusion of Lemma \ref{lem:b cycles lemma}
Case (3) holds if $|\mu(b_{1})|>3p-1$. That $p-2$ free letters exist
in the strong decomposition of $b_{1}b_{2}$ when $|\mu(b_{1})|\leq3p-1$ follows
in an almost identical manner to Case (1').

\textbf{Case (4): }Assume that $p+1\leq|\mu(b_{1})|\leq2p-2.$ We first
split off a $p$-cycle $b_{1}=Pb_{1}'$ to leave $2\leq|\mu(b_{1}')|\leq p-1$. It follows that $4\leq\mu(b_{1}'b_{2})\leq2p-2$
and hence we apply Theorem \ref{thm:(Bertram):-Regular even Bertram}
to get  $b_{1}'b_{2}=AB$. If this is a weak $p$-cycle
decomposition then we choose any necessary free letters (at most
$p-4$\textbf{ }needed) from the $p-1$ available in the cycle $P$.
Clearly then $b_{1}b_{2}=P\cdot A\cdot B$ is a strong decomposition in $I_{p}(A_{n})^{3}.$ 

This completes the proof of Lemma\textbf{ }\ref{lem:b cycles lemma}
\end{proof}

\subsection{Cycles in $\{c_{i}\}$ and pairs of cycles in $\{d_{i}\}$; initial
considerations\label{sec:  c and d cycles, inital considerations}}

In this section we consider the collections of cycles $\{c_{i}\}$
and $\{d_{i}\}$ in the decomposition (\ref{eq:disjoint decomp of our initial element})
of $\sigma$ and prove Lemma \ref{lem:first cd lemma}. Recall that
these cycles have length less than $p$, and are of odd and even lengths
respectively. We take a slightly different approach from the previous
two sections and consider the collections $\prod_{j=1}^{k_{c}}c_{j}$
and $\prod_{t=1}^{k_{d}}d_{t}$ as a whole, rather than the individual
cycles. Further recall that $k_{d}$ is assumed even as this guarantees
that $g=\prod_{j=1}^{k_{c}}c_{j}\prod_{t=1}^{k_{d}}d_{t}\in A_{n}.$
 In this section we assume additionally that $g$ satisfies 
\[
(|\mu(g)|+c^{*}(g))/2\leq p.
\]
 The case where this does not hold is considered in Section \ref{sec:Can't apply Bertram}.
\begin{lem}
\label{lem:first cd lemma}Let $g=\prod_{j=1}^{k_{c}}c_{j}\prod_{t=1}^{k_{d}}d_{t}$
be the sub-element of $\sigma$ as described in (\ref{eq:disjoint decomp of our initial element})
and assume that $(|\mu(g)|+c^{*}(g))/2\leq p$. If $\sigma=g$ or
$|\mu(g)|\geq p$ then $\pw(g)\leq2$. Suppose instead that $|\mu(g)|<p$,
and $h$ is a non-trivial sub-element of $\prod_{i=1}^{k_{a}}a_{i}\prod_{j=1}^{k_{c}}c_{j}$
of one of the following forms:

\begin{enumerate}
\item a single cycle $h=a$ of odd length such that $p\leq|\mu(a)|\leq2p-1$;
\item a single cycle $h=a$ of odd length such that $2p\leq|\mu(a)|$;
\item a pair of even length cycles $h=b_{1}b_{2}$ such that $|\mu(b_{1})|\geq2p+2$
and $|\mu(b_{2})|>p$;
\item a pair of even length cycles $h=bd$ such that $|\mu(d)|<p<|\mu(b)|$;
\item a pair of even length cycles $h=b_{1}b_{2}$ such that $p<|\mu(b_{i})|\leq2p$.
\end{enumerate}
Then $\pw(gh)\leq3$ and there exists a strong decomposition of $gh\in I_{p}(A_{n})^{3}$. 
\end{lem}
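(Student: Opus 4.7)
The plan splits into the two natural parts of the statement.

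For the first claim, the hypothesis $l(g) = (|\mu(g)|+c^{*}(g))/2 \leq p$ allows a direct application of Theorem~\ref{thm:(Bertram):-Regular even Bertram} with $l = p$, giving $g = E_{1}E_{2}$ as a product of two $p$-cycles. If $|\mu(g)| \geq p$ this is a strong decomposition. If instead $\sigma = g$ and $|\mu(g)| < p$, the decomposition is weak but the required $p - |\mu(g)|$ extra letters can be taken from the $n - |\mu(g)| \geq p - |\mu(g)|$ fixed points of $\sigma$, which are available since $n \geq p$. Either way $\pw(g) \leq 2$.

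For the second claim the strategy is to combine a weak Bertram decomposition $g = E_{1}E_{2}$ with a strong decomposition $h = y_{1}y_{2}y_{3}$ supplied by Lemma~\ref{lem:a cycles lemma} or Lemma~\ref{lem:b cycles lemma}. Each of those lemmas records a lower bound on the number of free letters in the decomposition of $h$; the $p - |\mu(g)|$ extras needed to apply Theorem~\ref{thm:(Bertram):-Regular even Bertram} weakly to $g$ will be drawn entirely from this free-letter pool, so that by Remark~\ref{rem:successively} both $E_{1}$ and $E_{2}$ are supported inside $\mu(g) \cup \mu(h) = \mu(gh)$. To then collapse the five factors $E_{1}E_{2}y_{1}y_{2}y_{3}$ to three, I would select the free-letter pool entirely within a single $y_{i}$, so that $E_{1}$ and $E_{2}$ have supports disjoint from the other two $y_{j}$'s; those $E_{i}$'s commute past the latter $y_{j}$'s, and a rearrangement produces three factors each a product of cycles on pairwise disjoint orbits (hence in $I_{p}(A_{n})$). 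In cases (2)--(5), matching the free-letter counts against $p - |\mu(g)|$ is routine: $g$ is non-trivial so $|\mu(g)| \geq 2$, and each case guarantees at least $p-2$ free letters.

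The principal obstacle is Case~(1), where $h = a$ is a single odd cycle with $p \leq |\mu(a)| \leq 2p - 1$ and Lemma~\ref{lem:a cycles lemma} Case~(5) provides no guaranteed free letters. I would handle this directly. When $|\mu(a)| = p$, $a$ is itself a $p$-cycle and serves as the third factor, with $g = E_{1}E_{2}$ obtained from weak Bertram using extras drawn freely from $\mu(a)$, giving the strong decomposition $gh = E_{1} \cdot E_{2} \cdot a$. When $p < |\mu(a)| \leq 2p - 1$, write $a = A_{1}A_{2}$ by strong Theorem~\ref{thm:(Bertram):-Regular even Bertram} and proceed by sub-case analysis based on $|\mu(a)|$ versus $|\mu(g)|$: the extras for $E_{1}, E_{2}$ are drawn either from $\mu(A_{1}) \setminus \mu(A_{2})$ (of size $|\mu(a)|-p$) or from the overlap $\mu(A_{1}) \cap \mu(A_{2})$ (of size $2p - |\mu(a)|$), whichever accommodates $p - |\mu(g)|$ letters, and are positioned so that the five-factor product rearranges into a valid strong decomposition in $I_{p}(A_{n})^{3}$.
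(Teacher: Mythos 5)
Your treatment of the first claim and your overall strategy for the second (strong decomposition of $h$ from Lemmas \ref{lem:a cycles lemma}/\ref{lem:b cycles lemma}, weak Theorem \ref{thm:(Bertram):-Regular even Bertram} applied to $g$ using free letters of $h$) coincide with the paper's, but the proposal skips precisely the points where that strategy does not go through as stated. The rearrangement step is not always available: by Remark \ref{rem:successively} \emph{both} $E_{1}$ and $E_{2}$ contain the borrowed letters, so both intersect the factor $y_{i}$ supplying them and they intersect each other; hence each $E_{j}$ must be merged with one of the \emph{other two} factors. If the free-letter pool sits in the outer factor $y_{1}$ this works, giving $gh=y_{1}\cdot y_{2}E_{1}\cdot y_{3}E_{2}$, but Lemma \ref{lem:b cycles lemma}(1) only guarantees free letters in ``$x_{2}$ or $\bar{x}_{1}$'', i.e.\ possibly only in the middle factor $y_{2}$. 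In that situation $E_{1}$ (say) would have to be moved leftward past $y_{2}$, with which it does not commute, and no grouping of $y_{1}y_{2}y_{3}E_{1}E_{2}$ into three order-$p$ factors is available by commutation alone. The paper's Case (3) resolves this with an explicit identity rewriting the product $y_{2}A$ of two intersecting $p$-cycles as $A'y'_{2}$, a product of two $p$-cycles in the opposite order; your proposal has no substitute for this step, and Cases (4) and (5) lean on it as well.

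Two of the sub-cases you call routine are also not. In Case (4) with $p<|\mu(b)|\leq 2p-2$, Lemma \ref{lem:b cycles lemma}(4) guarantees \emph{no} free letters, so your claim that ``each case guarantees at least $p-2$ free letters'' fails and there is no pool to draw from; the paper instead splits a $p$-cycle $P$ off $b$ and applies Theorem \ref{thm:(Bertram):-Regular even Bertram} to $db'$, to $gdb'$, or to $g$ and $db'$ separately, using $\mu(P)$ for spares and a counting contradiction via $c^{*}(g)\leq\frac{p-1}{2}$ when even those do not suffice. In your Case (1) fallback, drawing the extras from $\mu(A_{1})\cap\mu(A_{2})$ cannot work: those letters are moved by both $A_{1}$ and $A_{2}$, so $E_{1},E_{2},A_{1},A_{2}$ would pairwise intersect, no two of the four $p$-cycles could be combined into a single element of $I_{p}(A_{n})$, and you would be left with four factors. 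The paper's move in the sub-case $|\mu(a)|-p<p-|\mu(g)|$ is genuinely different: split a $p$-cycle $P$ off $a$ and apply Theorem \ref{thm:(Bertram):-Regular even Bertram} to the combined element $ga'$, which is shown either to satisfy the hypothesis (with spares from $\mu(P)$) or to force a contradiction with $c^{*}(g)\leq\frac{p-1}{2}$. These omissions are the actual content of the lemma's proof, so the proposal as written has genuine gaps.
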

\begin{proof}
Clearly if $|\mu(g)|\geq p$ then by Theorem \ref{thm:(Bertram):-Regular even Bertram},
we can write $g=AB$ a strong decomposition of $p$-cycles and Lemma
\ref{lem:first cd lemma} holds. So we assume instead that $|\mu(g)|<p$
and so a weak application of Theorem \ref{thm:(Bertram):-Regular even Bertram}
requires $p-|\mu(g)|$ extra letters, where $1\leq p-|\mu(g)|\leq p-3$.
If additionally $g=\sigma$, it is  straightforward to see that
$\pw(g)\leq2$: recall that $g\in A_{n}$ where $n\geq p$ and hence
$g$ fixes $n-|\mu(g)|$ letters and $n-|\mu(g)|\geq p-|\mu(g)|$.
Taking any of these free letters as necessary, it follows that $\pw(g)\leq2$
by Theorem \ref{thm:(Bertram):-Regular even Bertram}. 

The remainder of this proof now addresses cases (1)-(5) of Lemma \ref{lem:first cd lemma}.
In particular we explain where to find the necessary free extra letters
(to apply weak Theorem \ref{thm:(Bertram):-Regular even Bertram}
to $g$) when we consider the 5 different possible sub-elements $h$
of $\prod_{i=1}^{k_{a}}a_{i}\prod_{j=1}^{k_{c}}c_{j}$. 

\textbf{Case (1):} Assume that $h=a$, a single odd length cycle such
that $p\leq|\mu(a)|\leq2p-1$. 

Firstly assume additionally that $a$ is not a $p$-cycle. Let $g=A_{1}B_{1}$
be the weak application of Theorem \ref{thm:(Bertram):-Regular even Bertram}
requiring $p-|\mu(g)|$ free letters. Write $|\mu(a)|=p+x$ and assume
additionally that $x\geq p-|\mu(g)|$. By Lemma \ref{lem:a cycles lemma}
case (5), we write a strong decomposition $a=A_{2}B_{2}$ where $|\mu(B_{2})\backslash\mu(A_{2})|=x$.
We then take these free letters in the decomposition of $g$ to write
$ga=A_{1}\cdot B_{1}A_{2}\cdot B_{2}\in I_{p}(A_{n})^{3}$, and this is
a strong decomposition as required.

Suppose instead that $x<p-|\mu(g)|$. First split off a $p$-cycle
$P$ from $a$ i.e. $ga=ga'P$ where $|\mu(a')\cap\mu(P)|=1$ and
$|\mu(a')|=x+1$. Now there exist two possibilities: if $|\mu(ga')|+c^{*}(ga')\leq2p$
then the statement of Lemma \ref{lem:first cd lemma} follows by an
application of Theorem \ref{thm:(Bertram):-Regular even Bertram}
(any free letters needed for a weak decomposition can be taken from
the $p-1$ available in $\mu(P)$). So suppose instead that 
\[
\frac{|\mu(ga')|+c^{*}(ga')}{2}=\frac{|\mu(g)|+c^{*}(g)+(x+2)}{2}>p.
\]
 But note that $|\mu(g)|$ is at most $p-x-1$ and so 
\[
c^{*}(g)>2p-(x+2)-|\mu(g)|\geq p-1.
\]
 This is clearly a contradiction as $|\mu(g)|\leq p-1$ implies $c^{*}(g)\leq\frac{p-1}{2}$.

When $a$ is a $p$-cycle it is clear that Lemma \ref{lem:first cd lemma}
still holds as we apply weak Theorem \ref{thm:(Bertram):-Regular even Bertram}
to $g$, taking any necessary extra letters from those in $\mu(a)$.

\textbf{Case} \textbf{(2)}: Assume that $h=a$ is a single odd length
cycle such that $2p\leq|\mu(a)|$.

By Lemma \ref{lem:a cycles lemma} we write $ag=y_{1}y_{2}y_{3}g$
where the $y_{i}\in I_{p}(A_{n})$ and $\mu(y_{i})\subseteq\mu(a)$. As
$y_{1}$ contains at least $p-2$ free letters (see Lemma \ref{lem:a cycles lemma})
we can choose from these when we form the weak decomposition $g=AB$.
It follows that $ag=y_{1}\cdot y_{2}A\cdot y_{3}B$ is a strong decomposition
in $I_{p}(A_{n})^{3}$.

\textbf{Case (3):} Assume that $h=b_{1}b_{2}$, a pair of even length
cycles such that $|\mu(b_{1})|\geq2p+2$ and $|\mu(b_{2})|>p$.

Firstly, by Lemma \ref{lem:b cycles lemma} Case (1) we write $b_{1}b_{2}g=y_{1}y_{2}y_{3}g$
such that $y_{i}\in I_{p}(A_{n})$, $\mu(y_{i})\subseteq\mu(b_{1}b_{2})$
and there exist at least $p-2$ free letters in either $y_{1}$ or
$y_{2}$. We then apply Theorem \ref{thm:(Bertram):-Regular even Bertram}
to write $g=AB$ using any of these necessary free letters. The result
is however not immediate if they have been taken from $\mu(y_{2})$
as this means $\mu(y_{2})\cap\mu(A)\neq\emptyset$ and it isn't
clear that $b_{1}b_{2}g=y_{1}(y_{2}A)(y_{3}B)$ is a product of $3$
$\op$-elements. Furthermore, $y_{2}$ and $A$ will not commute in
general, but we can work around this in the following way. 

Firstly note from the proof of Lemma \ref{lem:b cycles lemma} Case
(1) that the free letters (in $\mu(y_{2})$) we are using in the decomposition
$g=AB$ are all taken from a single cycle of $y_{2}$. Therefore $A$
commutes with all but at most 1 $p$-cycle of $y_{2}$, and so we can
reduce to considering the case where $y_{2}$ is indeed a single $p$-cycle.
Now we can assume without loss of generality that the $k=p-|\mu(g)|$
free letters we use appear successively in $y_{2}$. We therefore
denote these by $o_{1},\dots,o_{k}\in\mu(y_{2})$ and write $y_{2}=(o_{1},\dots,o_{k},l_{1},\dots,l_{p-k})$.
Also, by Remark \ref{rem:successively} it is clear that $A$ and $B$ are
constructed such that any letters in $\mu(A)\backslash\mu(g)$ appear
successively in the cycle $A$. In summary we can write $A=(o_{k},\dots,o_{1},m_{1},\dots,m_{p-k})$,
where $\mu(g)=\{m_{i}\}_{i=1}^{p-k}$. It is then a straightforward
check that 
\begin{eqnarray*}
y_{2}\cdot A & = & (o_{1},\dots,o_{k},l_{1},\dots,l_{p-k})(o_{k},\dots,o_{1},m_{1},\dots,m_{p-k})\\
 & = & (o_{k},l_{1},\dots,l_{p-k},m_{1},\dots,m_{p-k})\\
 & = & (m_{1},\dots,m_{p-k},o_{k},o_{1},o_{2},\dots,o_{k-1})(m_{1},o_{k-1},\dots\,o_{1},l_{1},\dots,l_{p-k})\\
 & =: & A'\cdot y'_{2}.
\end{eqnarray*}
Note that $\mu(A)=\mu(A')$ and $|\mu(y_{2})\cap\mu(y'_{2})|=p-1$.
Furthermore, as our initial sub-element $g$ is disjoint from $b_{1}b_{2}$,
it follows that $m_{1}\notin\mu(y_{2})$ and that $y'_{2}$ is still
a $p$-cycle. We conclude that $b_{1}b_{2}g=y_{1}y_{2}y_{3}g=y_{1}y_{2}Ay_{3}B=y_{1}A'\cdot y'_{2}\cdot y_{3}B$
is a strong decomposition in $I_{p}(A_{n})^{3}$.

\textbf{Case (4): }Assume that $h=bd$ a pair of even length cycles
such that $|\mu(d)|<p<|\mu(b)|$. 

If $2p\leq|\mu(b)|$ then by Lemma \ref{lem:b cycles lemma} there
exists a strong decomposition of $bd\in I_{p}(A_{n})^{3}$. In particular
$bd=y_{1}\cdot y_{2}A_{1}\cdot y_{3}B_{1}$ where either $y_{1}$
or $y_{2}$ contains at least $p-2$ free letters. Let $g=A_{2}B_{2}$
be the weak decomposition into $p$-cycles by Theorem \ref{thm:(Bertram):-Regular even Bertram},
using at most $p-3$ of these available letters. If the letters are
taken from $\mu(y_{1})$ then $bdg = y_{1}\cdot y_{2}A_{1}A_{2} \cdot y_{3}B_{1}B_{2}$ is a strong decomposition in $I_{p}(G)^{3}$ as required. Otherwise, $y_{2}$ and $A_{2}$  don't necessarily commute but the result follows in a similar manner to the previous case.


Assume instead that $|\mu(b)|\leq2p-2.$ First split off a $p$-cycle
by writing $b=b'P$ where $|\mu(b')\cap\mu(P)|=1$. It follows that
$2\leq|\mu(d)|,\,|\mu(b')|\leq p-1$ and we now consider the product
$gdb'P$. As $|\mu(db')|\leq2p-2$ we can apply Theorem \ref{thm:(Bertram):-Regular even Bertram}.
Suppose additionally that $|\mu(db')|\geq p$ and so we can write
$db'=A_{1}B_{1}$, a strong decomposition of $p$-cycles. We then apply weak
Theorem \ref{thm:(Bertram):-Regular even Bertram} to give $g=A_{2}B_{2}$
where we take any necessary extra letters from $\mu(P)$. It follows
that $gdb=gdb'P=A_{2}B_{2}A_{1}B_{1}P=A_{2}A_{1}\cdot B_{2}B_{1}\cdot P\in I_{p}(A_{n})^{3}$, and
that this is a strong decomposition. 

It remains to consider the case where $|\mu(db')|<p$, and this divides
into 2 sub-cases. Firstly, if 
\begin{equation}
\frac{|\mu(gdb')|+c^{*}(gdb')}{2}\leq p\label{eq:gdb'}
\end{equation}
 then we apply Theorem \ref{thm:(Bertram):-Regular even Bertram}
to write $gdb'$ as a product of 2 $p$-cycles. We have $p-1$ available
free letters in $\mu(P)$ if needed and then the result follows similarly
to above. If instead (\ref{eq:gdb'}) does not hold then
\begin{equation}
|\mu(gdb')|+c^{*}(gdb')>2p\label{eq:decomp1}
\end{equation}
 and we consider separate weak decompositions of $g$ and $db'$.
Both of these are possible but we require a total of $2p-(|\mu(g)|+|\mu(d)|+|\mu(b')|)$
extra letters. If these exist in $\mu(P)$ then the conclusion of
the Lemma follows as above. If not, then $2p-(|\mu(g)|+|\mu(d)|+|\mu(b')|)\geq p.$
 From (\ref{eq:decomp1}) it then follows that 
\[
c^{*}(g)+2>2p-(|\mu(g)|+|\mu(d)|+|\mu(b')|)\geq p.
\]
 But this is a contradiction as $c^{*}(g)\leq\frac{p-1}{2}$ yields
that $\frac{p+3}{2}>p$ which is false. 

\textbf{Case (5):} Assume that $h=b_{1}b_{2}$, a pair of even length
cycles such that $p<|\mu(b_{i})|\leq2p$.

Recall from Lemma \ref{lem:b cycles lemma} that $\pw(b_{1}b_{2})\leq3$.
In particular there exists a strong decomposition $b_{1}b_{2}=A_{1}B_{1}c$,
where $A_{1}$ and $B_{1}$ are $p$-cycles appearing from an application
of Theorem \ref{thm:(Bertram):-Regular even Bertram} and $c$ is
a product of at most 3 $p$-cycles. We find in all cases that we have
at least $p-1$ free letters in $\mu(c)$ and thus we can use these
in the weak decomposition $g=A_{2}B_{2}$.
The result then follows in a similar manner to the above, completing the final case of Lemma \ref{lem:first cd lemma}.
\end{proof}

\subsection{Cycles in $\{c_{i}\}$ and pairs of cycles in $\{d_{i}\}$; final
considerations\label{sec:Can't apply Bertram}}

In this final section we consider sub-elements of $\sigma$ of the
form $g=g_{1}\dots g_{k}=\prod_{j=1}^{k_{c}}c_{j}\prod_{t=1}^{k_{d}}d_{t}$
such that $|\mu(g)|+c^{*}(g)>2p$.  This condition prevents us from
applying Theorem \ref{thm:(Bertram):-Regular even Bertram}, but also
note that we may not be able to apply strong Theorem \ref{thm:(Bertram):-Regular even Bertram}
to a sub-element of $g$. For example, consider $p=5$ and a $(2^{4})$
element in $A_{8}$. 

However, we claim that there exists a sub-collection of cycles $\{g_{i_{j}}\}\subset\{g_{i}\}_{i=1}^{k}$
such that we can apply either strong Theorem \ref{thm:(Bertram):-Regular even Bertram}
or strong Theorem \ref{thm:(Bertram):odd bertram} to the sub-element
$\Pi g_{i_{j}}$. For example in the case above we can apply strong Theorem
\ref{thm:(Bertram):odd bertram} to the sub-element consisting of
a $(2^{3})$ element. We formalise and prove this statement below.
\begin{lem}
Let $g=g_{1}\dots g_{k}=\prod_{j=1}^{k_{c}}c_{j}\prod_{t=1}^{k_{d}}d_{t}\in S_{n}$
where the $c_{j}$ and $d_{t}$ are disjoint cyc\label{lem:We can apply odd or even Bertram(strong) to subelements}les
of odd and even lengths respectively, and all lengths are less than
$p$. Further assume that
\begin{equation}
|\mu(g)|+c^{*}(g)>2p\ \ \text{if}\ g\in A_{n},\label{eq:can't apply even bertram}
\end{equation}
\begin{equation}
|\mu(g)|+c^{*}(g)>2p-1\ \ \text{if}\ g\in S_{n}\backslash A_{n}.\label{eq:can't apply odd bertrram}
\end{equation}
Then there exists a sub-element $g'$ of $g$, such that $|\mu(g')|\geq p$
and 
\begin{equation}
|\mu(g')|+c^{*}(g')\leq2p\ \ \text{if}\ g'\in A_{n},\label{eq:can apply even bertram}
\end{equation}
\begin{equation}
|\mu(g')|+c^{*}(g')\leq2p-1\ \ \text{if}\ g'\in S_{n}\backslash A_{n}.\label{eq:can apply odd bertram}
\end{equation}
 In particular we can apply either strong Theorem \ref{thm:(Bertram):-Regular even Bertram}
or strong Theorem \ref{thm:(Bertram):odd bertram} to $g'$.
\end{lem}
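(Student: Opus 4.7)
The plan is to build $g'$ greedily by ordering the cycles of $g$ in decreasing length and taking the shortest prefix whose support first reaches $p$. The hypothesis $|\mu(g)|+c^*(g)\ge 2p$ combined with the bound $c^*(g)\le |\mu(g)|/2$ (every cycle has length at least two) forces $|\mu(g)|\ge 4p/3$, so such a prefix exists; that it is a proper prefix (so $g'$ is genuinely a sub-element) will follow at the end from the bound we establish.

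Relabel so $\ell_1\ge\ell_2\ge\dots\ge\ell_k$, writing $\ell_i:=|\mu(g_i)|\in[2,p-1]$, and set $G_m:=g_1\cdots g_m$. Let $m_0$ be the smallest $m$ with $|\mu(G_m)|\ge p$, and propose $g':=G_{m_0}$. The upper bound $|\mu(G_{m_0})|\le(p-1)+\ell_{m_0}\le 2p-2$ is immediate from minimality. For $c^*(G_{m_0})=m_0$, the decreasing order gives $(m_0-1)\ell_{m_0}\le|\mu(G_{m_0-1})|\le p-1$, hence $m_0\le 1+(p-1)/\ell_{m_0}$. Summing,
\[
|\mu(G_{m_0})|+c^*(G_{m_0})\le p+\ell_{m_0}+\frac{p-1}{\ell_{m_0}},
\]
and the elementary estimate $f(x):=x+(p-1)/x\le p$ for $x\in[2,p-1]$ (verified at the endpoints: $f(2)=(p+3)/2\le p$ for $p\ge 3$, and $f(p-1)=p$) yields the first conclusion $|\mu(G_{m_0})|+c^*(G_{m_0})\le 2p$. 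Comparing this bound with the hypothesis on $g$ forces $m_0<k$, confirming that $g'$ is a proper sub-element.

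The main obstacle is the parity-refined bound: when $G_{m_0}\in S_n\setminus A_n$, we need the strict inequality $|\mu(G_{m_0})|+c^*(G_{m_0})\le 2p-1$. I would handle this by an equality-case analysis. The chain of inequalities above attains $2p$ only if all three estimates are tight simultaneously, and in particular $f(\ell_{m_0})=p$. The equation $x+(p-1)/x=p$ factors as $(x-1)(x-(p-1))=0$, so with $\ell_{m_0}\ge 2$ we are forced into $\ell_{m_0}=p-1$, whence $m_0=2$ and $\ell_1=p-1$. In that configuration $G_{m_0}$ is a product of two $(p-1)$-cycles; since $p$ is odd, $p-1$ is even, each such cycle is an odd permutation, and their product is even. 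Hence $G_{m_0}\in A_n$, so the extremal case $S(G_{m_0})=2p$ is incompatible with $G_{m_0}\notin A_n$. This parity analysis at the extremal configuration is the only delicate point; everything else is a direct counting estimate, after which strong Theorem~\ref{thm:(Bertram):-Regular even Bertram} or strong Theorem~\ref{thm:(Bertram):odd bertram} applies to $g'$ according to its parity.
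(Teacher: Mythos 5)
Your argument is correct, but it takes a genuinely different route from the paper. The paper proves Lemma \ref{lem:We can apply odd or even Bertram(strong) to subelements} by induction on the number of cycles $k$ (base case $k=3$), at each step locating a cycle whose removal leaves a sub-element still moving at least $p$ letters and then invoking the inductive hypothesis. You instead give a direct greedy construction: sort the cycles by decreasing length, take the shortest prefix whose support reaches $p$, and control $|\mu(g')|+c^{*}(g')$ via the convexity estimate $x+(p-1)/x\leq p$ on $[2,p-1]$, finishing the parity-sensitive bound $\leq 2p-1$ for odd $g'$ by an equality-case analysis (the extremal configuration is a product of two $(p-1)$-cycles, which is even since $p$ is odd, so the value $2p$ cannot occur for an odd sub-element). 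I checked the details: the existence of the prefix follows from $c^{*}(g)\leq|\mu(g)|/2$ and the hypothesis; minimality gives $|\mu(G_{m_{0}})|\leq(p-1)+\ell_{m_{0}}$ and $(m_{0}-1)\ell_{m_{0}}\leq p-1$; the factorisation $(x-1)(x-(p-1))=0$ correctly isolates $\ell_{m_{0}}=p-1$, $m_{0}=2$, $\ell_{1}=p-1$ as the unique extremal case; and properness of the sub-element does follow a posteriori from the established bounds versus the hypotheses. Your approach buys a non-inductive, quantitative argument that also pinpoints exactly when the bound $2p$ is attained, at the cost of the slightly delicate equality-case bookkeeping; the paper's induction is softer and avoids the extremal analysis but requires checking the base case and the support-counting claim in the inductive step separately for the even and odd cases.
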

\begin{proof}
We proceed by induction on $k$. Clearly our base case is $k=3$:
if $k=1,2$ then the assumption that $|\mu(g_{i})|<p$ contradicts
(\ref{eq:can't apply even bertram}) or (\ref{eq:can't apply odd bertrram}). 

Firstly assume that $g_{1}g_{2}g_{3}\in A_{n}$ 
and consider the sub-element $g_{1}g_{2}\in S_{n}$. If $g_{1}g_{2}$ satisfies
either condition (\ref{eq:can apply even bertram}) or (\ref{eq:can apply odd bertram})
then the conclusion of the lemma follows. We therefore assume that
this is not the case and so we must have insufficient letters i.e.
$|\mu(g_{1}g_{2})|<p$. Recall that by assumption, $|\mu(g_{3})|\leq p-1$ and hence
$|\mu(g_{1}g_{2}g_{3})|\leq2p-2$. It then follows from the  initial assumption
(\ref{eq:can't apply even bertram}) that $|\mu(g_{1}g_{2}g_{3})|=2p-2$
exactly, and so $|\mu(g_{1}g_{2})|=p-1=|\mu(g_{3})|$. Noting that $|\mu(g_{1})|\leq p-3$, it is clear that the conclusion of the lemma holds for the sub-element $g_{1}g_{3}\in S_{n}$. The case $g_{1}g_{2}g_{3}\in S_{n}\backslash A_{n}$
follows in an almost identical manner.

For the inductive step assume that the result holds for $k\geq3$
and first consider $g=g_{1}\dots g_{k+1}\in A_{n}$. We claim that
there exists $1\leq i\leq k+1$ such that $g'=g\backslash g_{i}$
(see Definition \ref{def:Initial definitions of An}) has $|\mu(g')|\geq p$:
to see this first note that if all such sub-elements $g'$ have less
than $p$ letters, then in particular $|\mu(g_{1}\dots g_{k})|<p$.
Combining this with the initial assumption (\ref{eq:can't apply even bertram})
that $|\mu(g_{1}\dots g_{k+1})|+(k+1)>2p$ yields 
\[
|\mu(g_{k+1})|>2p-(k+1)-|\mu(g_{1}\dots g_{k})|>p-(k+1).
\]
 Similarly $|\mu(g_{2}\dots g_{k+1})|<p$, and noting that $|\mu(g_{i})|\geq2$
for all $g_{i}$ gives
\[
|\mu(g_{k+1})|<p-2(k-1).
\]
 Hence 
\[
p-2k+2>p-k-1,
\]
 which is a contradiction, proving the claim.

Fix such a $g'$ that moves at least $p$ letters. Now either $g'$
satisfies the conclusion of the lemma or by the inductive hypothesis
there exists a sub-element of $g'$ which does. Naturally the result
follows for $g$ in either case. If $g\in S_{n}\backslash A_{n}$
we use a similar method and the proof of the lemma is complete.
\end{proof}
Using Lemma \ref{lem:We can apply odd or even Bertram(strong) to subelements}
we can now prove the main result of this section.
\begin{lem}
\label{lem:cd cycles second lemma}Let $g=\prod_{s=1}^{k_{c}}c_{s}\prod_{t=1}^{k_{d}}d_{t}$
be the sub-element of $\sigma$ as in (\ref{eq:disjoint decomp of our initial element})
and assume that $(|\mu(g)|+c^{*}(g))/2>p$. Then $\pw(g)\leq3$ and
$g$ has a strong decomposition in $I_{p}(A_{n})^{3}$. 
\end{lem}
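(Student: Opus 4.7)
The plan is to iteratively apply Lemma \ref{lem:We can apply odd or even Bertram(strong) to subelements} to decompose $g$, and then to assemble the resulting cycles into at most three order-$p$ elements. I would argue by induction on the quantity $|\mu(g)| + c^{*}(g)$.

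The first step is to use Lemma \ref{lem:We can apply odd or even Bertram(strong) to subelements} to produce a sub-element $g'_{1}$ of $g$ with $|\mu(g'_{1})| \geq p$ to which strong Theorem \ref{thm:(Bertram):-Regular even Bertram} (if $g'_{1} \in A_{n}$) or strong Theorem \ref{thm:(Bertram):odd bertram} (if $g'_{1} \in S_{n}\setminus A_{n}$) applies. An application of the relevant theorem yields $g'_{1} = A_{1} B_{1}$ with $\mu(A_{1}), \mu(B_{1}) \subseteq \mu(g'_{1})$. Setting $h := g \setminus g'_{1}$, which has support disjoint from $\mu(g'_{1})$, three cases arise: $h$ is trivial (so $g = A_{1}B_{1}$ and $\pw(g) \leq 2$); or $h$ (possibly after extracting a sub-element via another application of Lemma \ref{lem:We can apply odd or even Bertram(strong) to subelements}) again admits strong Bertram, yielding $h = A_{2} B_{2}$, in which case the disjointness of $\mu(g'_{1})$ and $\mu(h)$ allows us to commute and group the $p$-cycles into $g = (A_{1}A_{2})(B_{1}B_{2}) \in I_{p}(A_{n})^{2}$; or $h$ is nontrivial but too small for strong Bertram in isolation, in which case I would combine it with $B_{1}$ and apply strong Theorem \ref{thm:(Bertram):-Regular even Bertram} to $B_{1}h$, obtaining $B_{1}h = A_{2} B_{2}$ and hence $g = A_{1} \cdot A_{2} \cdot B_{2} \in I_{p}(A_{n})^{3}$. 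In every case, since each cycle produced has support contained in $\mu(g)$, the decomposition is strong.

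When further iteration is required, the inductive hypothesis handles the residual sub-element, and the disjoint $p$-cycles produced across iterations are grouped into at most three families by commutation on disjoint supports. The measure $|\mu(\cdot)| + c^{*}(\cdot)$ strictly decreases at each step since the extracted sub-element moves at least $p$ letters, so the process terminates. The main obstacle I expect to face is the odd-Bertram case: Theorem \ref{thm:(Bertram):odd bertram} produces a $(p-1)$-cycle $B_{1}$ which is not itself an element of $I_{p}(A_{n})$ and so must be absorbed into a subsequent Bertram application. Because $k_{d}$ is even, odd sub-elements naturally pair up, and the combination $B_{1}h$ (or a later $B_{1}B'_{2}$) lands in $A_{n}$ with support size controlled by the hypothesis $(|\mu(g)| + c^{*}(g))/2 > p$; this parity and size bookkeeping is the technical heart of the argument, ensuring that the three final groupings are genuine order-$p$ elements with support in $\mu(g)$.
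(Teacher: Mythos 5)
Your skeleton matches the paper's: iterate Lemma \ref{lem:We can apply odd or even Bertram(strong) to subelements} to peel off sub-elements admitting strong Theorem \ref{thm:(Bertram):-Regular even Bertram} or \ref{thm:(Bertram):odd bertram}, then merge the resulting disjoint $p$-cycles into three families. That part is fine, as is the observation that the $(p-1)$-cycles produced by odd Bertram pair up because the total element lies in $A_{n}$. But there is a genuine gap at exactly the point you describe as the third case: when the final remainder $h=r$ satisfies $0<|\mu(r)|<p$, you assert that one can ``apply strong Theorem \ref{thm:(Bertram):-Regular even Bertram} to $B_{1}h$, obtaining $B_{1}h=A_{2}B_{2}$.'' The hypothesis of that theorem requires $|\mu(B_{1}r)|+c^{*}(B_{1}r)\leq 2p$, and this can fail: $|\mu(B_{1})|=p$ while $|\mu(r)|+c^{*}(r)$ can be as large as $\tfrac{3}{2}(p-1)$, so $|\mu(B_{1}r)|+c^{*}(B_{1}r)$ can exceed $2p$ even after accounting for overlap. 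The same problem occurs in the all-odd-Bertram case with $d_{1}r$ in place of $B_{1}r$. You flag this as ``the technical heart of the argument'' but do not supply it, and it is precisely where the paper spends most of its effort.

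Concretely, the paper's proof isolates the disjoint piece $A_{1}B_{1}r$ (or $C_{1}d_{1}r$, or $C_{1}C_{2}d_{1}d_{2}r$), pins down the obstructed range via inequalities such as $p\leq|\mu(AB)|$, $\tfrac{2p}{3}\leq|\mu(r)|<p$, $|\mu(AB)|+|\mu(r)|<2p$ and $p\leq|\mu(r)|+c^{*}(r)\leq\tfrac{3}{2}(p-1)$, and then uses a different device: rewrite the whole product as a single long cycle times a ``fixing'' cycle as in (\ref{eq:long element and fixing cycle}), split off a $p$-cycle $P$ from the long cycle, and verify $|\mu(xy)|+c^{*}(xy)\leq 2p$ for the residue by a count that crucially uses Lemma \ref{lem:lemma} together with the overlap $m=|\mu(A)\cap\mu(B)|\geq 1$ forced by the construction. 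In the case with no pairs $A_{i}B_{i}$ there is additionally a combinatorial claim that $r$ splits as $hh'$ with both $hP$ and $h'P'$ within reach of strong Bertram, plus an argument that the intersection letters of $P$ and $P'$ can be arranged to occur consecutively so that the resulting $p$-cycles commute as needed. None of this is recoverable from ``absorb $B_{1}$ into a subsequent Bertram application,'' so as written your induction does not close in the critical range.
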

\begin{proof}
 The first step in the proof of Lemma \ref{lem:cd cycles second lemma}
is to repeatedly apply Lemma \ref{lem:We can apply odd or even Bertram(strong) to subelements}
to our element $g=\prod_{j=1}^{k_{c}}c_{j}\prod_{t=1}^{k_{d}}d_{t}$.
This reduces it to the form 
\begin{equation}
g=A_{1}\dots A_{u}C_{1}\dots C_{v}\cdot B_{1}\dots B_{u}d_{1}\dots d_{v}\cdot g_{1}\dots g_{t},\label{eq:reference decomp after iterations}
\end{equation}
 where $A_{i}B_{i}$ are pairs of $p$-cycles and $C_{j}d_{j}$ are
products of a $p$-cycle and $(p-1)$-cycle. These come from applying
strong Theorem \ref{thm:(Bertram):-Regular even Bertram} and strong
Theorem \ref{thm:(Bertram):odd bertram} respectively. Let $r=g_{1}\dots g_{t}\in S_{n}$
denote the ``remainder'' from this process, chosen such that $r$
satisfies either condition (\ref{eq:can apply even bertram}) or (\ref{eq:can apply odd bertram}).
In particular we can apply either Theorem \ref{thm:(Bertram):-Regular even Bertram}
or Theorem \ref{thm:(Bertram):odd bertram} depending on the parity
of $r$.

Firstly suppose that $r=1$. As $g\in A_{n}$ it follows that $v$
is even and we can apply strong Theorem \ref{thm:(Bertram):-Regular even Bertram}
to pairs $d_{2i+1}d_{2i+2}$ to give $d_{2i+1}d_{2i+2}=P_{i}P'_{i}\in I_{p}(A_{n})^{2}.$
Rearranging cycles then yields
\begin{align*}
g & =A_{1}\dots A_{u}C_{1}\dots C_{v}\cdot B_{1}\dots B_{u}d_{1}\dots d_{v}\\
 & =A_{1}\dots A_{u}C_{1}\dots C_{v}\cdot B_{1}\dots B_{u}P_{1}\dots P_{v/2}\cdot P'_{1}\dots P'_{v/2}.
\end{align*}
 Hence $g\in I_{p}(A_{n})^{3}$ with a strong decomposition as required.
If instead $r$ is non-trivial and additionally $|\mu(r)|\geq p$
then we first apply strong Theorem \ref{thm:(Bertram):-Regular even Bertram}
or \ref{thm:(Bertram):odd bertram} to $r$ before proceeding as above
(the $r=1$ case).

To complete the proof of Lemma \ref{lem:cd cycles second lemma} it
remains to consider $r\neq1$ such that $|\mu(r)|<p$. Now we can
apply weak Theorem \ref{thm:(Bertram):-Regular even Bertram} or \ref{thm:(Bertram):odd bertram}
to $r=g_{1}\dots g_{t}$ but we will need up to $p-2$ free letters
to complete the decomposition. As in the proof of Lemma \ref{lem:first cd lemma},
we now have a number of sub-cases to consider and for each of these
we show where to find the necessary letters. 

\textbf{Case (1): }Assume that the decomposition (\ref{eq:reference decomp after iterations})
contains at least one pair $A_{1}B_{1}$: 

Firstly assume that $v$ is even. We apply strong Theorem \ref{thm:(Bertram):-Regular even Bertram}
to the pairs of $p-1$ cycles, writing $d_{2i+1}d_{2i+2}=P_{i}P'_{i}$
and rearranging cycles in $g$ to give 
\[
g=A_{1}B_{1}r\cdot A_{2}\dots A_{u}C_{1}\dots C_{v}\cdot B_{2}\dots B_{u}P_{1}\dots P_{v/2}\cdot P'_{1}\dots P'_{v/2}.
\]
If instead we assume that $v$ is odd then $g_{1}\dots g_{t}\in S_{n}\backslash A_{n}$.
Without loss of generality $|\mu(g_{1})|$ is even and we relabel
$g_{1}=d_{v+1}$ before proceeding as above, writing pairs $d_{2i+1}d_{2i+2}$
as products of $p$-cycles to achieve an analogous expression for
$g$.

Now the letters $\mu(A_{1}B_{1}r)$ are disjoint from the other cycles
so it suffices to show that $A_{1}B_{1}r\in I_{p}(A_{n})^{3}$ (using
only letters in $\mu(A_{1}B_{1}r)$). For ease of notation we drop
the subscript from $A_{1}B_{1}$. 

Firstly recall that we can still apply weak Theorem \ref{thm:(Bertram):-Regular even Bertram}
to write $r=PP'$, a product of $p$-cycles. If there exists a sufficient
number of free letters in the $p$-cycle $A$ (so that $B$ and $P$
are disjoint), that is $|\mu(AB)|-p\geq p-|\mu(r)|,$ then $ABr=A\cdot BP\cdot P'\in I_{p}(A_{n})^{3}$
is a strong decomposition. We can therefore assume otherwise. It may
also hold that $Br$ satisfies the conditions of strong Theorem \ref{thm:(Bertram):-Regular even Bertram}
i.e. 
\[
|\mu(B)|+|\mu(r)|+(c^{*}(r)+1)\leq2p.
\]
Here we would clearly also have a strong decomposition of $ABr\in I_{p}(A_{n})^{3}$.
We shall therefore again assume otherwise, and in summary 
\[
|\mu(AB)|+|\mu(r)|<2p,
\]
\[
|\mu(r)|+c^{*}(r)\geq p.
\]
 Recall also that $3\leq|\mu(r)|<p$ and therefore it follows that
$p\leq|\mu(AB)|<2p-3$. 

Now if $|\mu(r)|<\frac{2p}{3}$ then
\[
|\mu(r)|+c^{*}(r)<\frac{3}{2}\cdot\frac{2p}{3}=p,
\]
 contradicting the above. In summary, we have the following information
\begin{equation}
p\leq|\mu(AB)|,\ \frac{2p}{3}\leq|\mu(r)|<p,\label{eq:Assumption 1}
\end{equation}
\begin{equation}
|\mu(AB)|+|\mu(r)|<2p,\label{eq:Assumption 2}
\end{equation}
\begin{equation}
p\leq|\mu(r)|+c^{*}(r)\leq\frac{3}{2}(p-1).\label{eq:Assumption 3}
\end{equation}
 It is now based on these assumptions that we show that $\pw(ABr)\leq3$
and that the decomposition in $I_{p}(A_{n})^{3}$ is strong. Firstly note that any product of disjoint cycles $(a_{i,1}\dots a_{i,l_{i}})$,
$1\leq i\leq k$ can be rewritten as 
\begin{equation}
\prod_{i=1}^{k}(a_{i,1}\dots a_{i,l_{i}})=(a_{1,1}\dots a_{k,l_{k}})(a_{k,1}\,a_{k-1,1}\,\dots a_{1,1}).\label{eq:long element and fixing cycle}
\end{equation}
This is a product of a single cycle containing all the letters of
$\mu\left(\Pi_{i=1}^{k}(a_{i,1}\dots a_{i,l_{i}})\right)$ in original
order, plus a "fixing" end cycle. Applying this to our element gives
$ABr=ly$ such that $\mu(l)=\mu(ABr)$ and $\mu(y)=c^{*}(AB)+c^{*}(r)$.
Now by assumption (\ref{eq:Assumption 1}), $|\mu(l)|\geq p+3$ and
hence we can break off a $p$-cycle $P$, writing $ABr=ly=Pxy$ where
$|\mu(P)\cap\mu(x)|=\{a_{1,1}\}.$ We claim that $xy$ satisfies the
conditions of Theorem \ref{thm:(Bertram):-Regular even Bertram} i.e.
\begin{equation}
|\mu(xy)|+c^{*}(xy)\leq2p.\label{eq:A claim}
\end{equation}
 To see this note that 
\begin{eqnarray*}
|\mu(xy)| & =  & |\mu(x)|+|\mu(y)|-|\mu(x)\cap\mu(y)|\\
 & = & \left(|\mu(AB)|+|\mu(r)|-(p-1)\right)+\left(c^{*}(AB)+c^{*}(r)\right)-|\mu(x)\cap\mu(y)|.
\end{eqnarray*}
 As $a_{1,1}\in\mu(x)\cap\mu(y)$, it follows that $c^{*}(xy)\leq|\mu(x)\cap\mu(y)|$
by Lemma \ref{lem:lemma}. Hence 
\begin{equation}
|\mu(xy)|+c^{*}(xy)\leq(\mu|(AB)|+|\mu(r)|-(p-1))+c^{*}(AB)+c^{*}(r).\label{eq:almost-1}
\end{equation}
 Denote $m=|\mu(A)\cap\mu(B)|$. It's clear that $m\geq1$ as by construction
$A$ and $B$ cannot be disjoint $p$-cycles.  It follows that $|\mu(AB)|= 2p-m$
and $c^{*}(AB)\leq$ min$\{m,\,\frac{2p-m}{2}$\} by Lemma \ref{lem:lemma}.
We also note by assumption (\ref{eq:Assumption 2}) that $|\mu(r)|\leq m-1$
and hence $c^{*}(r)\leq\frac{m-1}{2}.$ Combining this all with (\ref{eq:almost-1})
yields 
\begin{eqnarray*}
|\mu(xy)|+c^{*}(xy) & \leq & (2p-m)+m-1-(p-1)+\text{min}\{m,\,\frac{2p-m}{2}\}+\frac{m-1}{2}\\
 & = & p+\text{min}\{m,\,\frac{2p-m}{2}\}+\frac{m-1}{2}.
\end{eqnarray*}
 Now $m\geq\frac{2p-m}{2}$ if and only if $m\geq\frac{2p}{3}$. If
this holds then
\[
|\mu(xy)|+c^{*}(xy)\leq p+\frac{2p-m}{2}+\frac{m-1}{2}\leq p+\frac{2p-1}{2}<2p.
\]
 Suppose instead that $m\leq\frac{2p}{3}$. Then again
\[
|\mu(xy)|+c^{*}(xy)\leq p+\frac{3m-1}{2}\leq p+\frac{2p-1}{2}<2p,
\]
 and this proves the claim (\ref{eq:A claim}). As a result of claim
(\ref{eq:A claim}) we now apply Theorem \ref{thm:(Bertram):-Regular even Bertram}
and write $xy=A'B'$, a product of two $p$-cycles. This then yields
$ABr=PA'B'\in I_{p}(A_{n})^{3}$. Naturally if $|\mu(xy)|\geq p$ then
$xy=A'B'$ is a strong decomposition. If instead $xy=A'B'$ is a weak
decomposition, we have no issues as we know there exists a sufficient
number of free letters ($|\mu(ABr)|>|\mu(AB)|\geq p$ by assumption
(\ref{eq:Assumption 1})).

\textbf{Case (2): }Assume that the decomposition (\ref{eq:reference decomp after iterations})
contains no pairs $A_{i}B_{i}$: 

In this case we have only applied Theorem \ref{thm:(Bertram):odd bertram}
to sub-collections of our initial element, yielding
\[
g=C_{1}\dots C_{v}\cdot d_{1}\dots d_{v}\cdot r.
\]
Recall that $|\mu(r)|<p$, $C_{i}$ are mutually disjoint $p$-cycles
and $d_{i}$ are mutually disjoint $(p-1)$-cycles. Also note that
sgn$(r)=(-1)^{v}$, where $v\geq1$ (the case $v=0$ is addressed
by Lemma \ref{lem:first cd lemma}). 

In a similar fashion to the previous case we first reduce the number
of pairs $d_{i}$: if $v$ is even then we group the pairs $d_{3}d_{4}$,
$d_{5}d_{6},\dots$ and apply Theorem \ref{thm:(Bertram):-Regular even Bertram}
to write these as strong decompositions of $p$-cycles $d_{2i+1}d_{2i+2}=P_{i}P'_{i}$.
Rearranging gives
\[
g=C_{1}C_{2}d_{1}d_{2}r\cdot C_{3}\dots C_{v}\cdot P_{2}\dots P_{v/2}\cdot P'_{2}\dots P'_{v/2}.
\]

As $C_{1}C_{2}d_{1}d_{2}r$ is disjoint from the rest of $g$, it suffices
for the lemma to show that $C_{1}C_{2}d_{1}d_{2}r\in I_{p}(A_{n})^{3}$
using only the letters in $\mu(C_{1}C_{2}d_{1}d_{2}r)$. If instead
$v$ is odd, then we similarly pair $(p-1)$-cycles $d_{2}d_{3}\dots$
together and reduce to the case $g=C_{1}d_{1}r$. Note that here
$r\in S_{n}\backslash A_{n}$.

So in summary we now consider 
\begin{equation}
C_{1}C_{2}d_{1}d_{2}r,\ \ r\in A_{n}\label{eq:double C case}
\end{equation}
 and 
\begin{equation}
C_{1}d_{1}r,\ \ r\in S_{n}\backslash A_{n}.\label{eq:single C case}
\end{equation}
Evidently, if there exists a pair of even length cycles in $r\in A_{n}$ then
we can write $r=hh'$ where $h,\,h'\in S_{n}\backslash A_{n}$ and
then split case (\ref{eq:double C case}) into two separate examples
of (\ref{eq:single C case}). The latter will therefore be the main
focus of the following work. We first however treat the case where $r$ consists purely of odd-length cycles and this
splitting procedure is not possible.

So let $z=C_{1}C_{2}d_{1}d_{2}r$ and also assume initially that $r=g_{1}$,
a single odd-length cycle. First apply strong Theorem \ref{thm:(Bertram):-Regular even Bertram}
to write $d_{1}d_{2}=PP'$ a product of two $p$-cycles. As $\mu(d_{1}d_{2})=2p-2$
it follows that $\mu(P)\cap\mu(P')=\{a,\,b\}$ and we choose these letters to
occur successively in both $P$ and $P'$ (for example when $p=7$: 
\begin{eqnarray}
d_{1}d_{2} & = & (1\,2\,3\,4\,5\,6)(7\,8\,9\,11\,10\,12)\label{eq:d1d2 as p cycles}\\
 & = & (1\,2\,3\,4\,5\,6\,7\,8\,9\,10\,11\,12)(7\,1)\nonumber \\
 & = & (1\,2\,3\,4\,5\,6\,7)(7\,1\,8\,9\,10\,11\,12)).\nonumber 
\end{eqnarray}
This yields $z=C_{1}C_{2}\cdot P\cdot rP'$ and we then apply Theorem
\ref{thm:(Bertram):-Regular even Bertram} again to write $rP'=AB$,
a strong decomposition of $p$-cycles. To show that $z\in I_{p}(A_{n})^{3}$
we want to group the $p$-cycles $P$ and $A$ to give $z=C_{1}C_{2}\cdot PA\cdot B$.
This is clearly possible if $\{a,\,b\}\cap A=\emptyset$, and $A$ can be naturally chosen this way as the letters $a,\,b$
appear successively in $P'$ and $|\mu(r)|\geq3$.  For example if $p=7$, $|\mu(r)|=5$ and $\{a,\,b\}=\{11,12\}$:
\begin{eqnarray}
rP' & := & (1\, 2\, 3\, 4\, 5)(6\, 7\, 8\, 9\, 10\, 11\, 12) \nonumber \\
 & = & (1\, 2\, 3\, 4\, 5\, 6\, 7)(6\, 1\, 8\, 9\, 10\, 11\, 12)\nonumber \\
 & =: & AB.\nonumber
\end{eqnarray}


Now consider $z=C_{1}C_{2}d_{1}d_{2}r=C_{1}C_{2}PP'g_{1}\dots g_{t}$,
where $g_{i}$ are cycles of odd length less than $p$ and $t\geq2$.
We will see that $\pw(z)\leq3$ (and thus the conclusion of Lemma
\ref{lem:cd cycles second lemma} for Case (\ref{eq:double C case}))
can be deduced from the following claim.

\textit{Claim:}\textbf{ }There exist two sub-elements $h$ and $h'$
of $r$ such that $r=hh'$, $|\mu(hP)|+c^{*}(hP)\leq2p$ and $|\mu(hP)|+c^{*}(hP)\leq2p$. 

\textit{Proof.} Firstly, assume that there exists $g_{i}$, a sub-element
of $r$ such that $|\mu(g_{i})|\geq\frac{|\mu(r)|}{2}$. A simple
check shows that the elements $h=g_{i}$ and $h'=r\backslash g_{i}$
will then suffice for the claim (recall that $|\mu(r)|<p$). We therefore
assume from now on that the contrary holds, that is that $|\mu(g_{i})|<\frac{|\mu(r)|}{2}$
for $1\leq i\leq t$ (and consequently $t\geq 3$). Assume also for a contradiction that no such
$h$ and $h'$ exist. We can therefore instead take a pair $r=hh'$ such that without
loss of generality, the element $hP$ does not satisfy the claim and it follows
that 
\begin{equation}
|\mu(h)|+c^{*}(h)>p-1.\label{eq:h fails-1}
\end{equation}
 By this choice, we can then check that $h'P'$ does satisfy
the claim: if $|\mu(h')|+c^{*}(h')>p-1$ as well, then $|\mu(r)|+c^{*}(r)>2p-2$
which is false. So in summary $hP$ fails the claim but $h'P'$ does not. Finally,
we may also assume that $h$ is chosen to be ``just failing'', that
is if we remove any cycle $g_{i}$ from $h$ and attach it to the
sub-element $h'$ then the claim will apply 
to $h\backslash g_{i}$. Consequently, $h'g_{i}$ fails for all $g_{i}$
in $h$ i.e. 
\begin{equation}
|\mu(h'g_{i})|+c^{*}(h'g_{i})=|\mu(h')|+c^{*}(h')+|\mu(g_{i})|+1>p-1.\label{eq:h'g_i fails-1}
\end{equation}
 Combining (\ref{eq:h fails-1}) and (\ref{eq:h'g_i fails-1}) gives
\[
|\mu(r)|+c^{*}(r)+|\mu(g_{i})|>2p-3,
\]
for all $g_{i}$ in $h$. But recall that $|\mu(r)|<p$ (see the beginning
of Case (2)), hence $|\mu(r)|+c^{*}(r)\leq\frac{3(p-1)}{2}$ and 
\[
|\mu(g_{i})|>2p-3-\frac{3(p-1)}{2}=\frac{p-3}{2}.
\]
 This is a contradiction as $|\mu(g_{i})|<\frac{|\mu(r)|}{2}\leq\frac{p-1}{2}$
for all $g_{i}$ and this completes the proof of the claim.

To now complete the proof of Lemma \ref{lem:cd cycles second lemma}
Case (\ref{eq:double C case}) we apply the claim to find appropriate sub-elements
$h$ and $h'$ of $r$, and write $z=C_{1}C_{2}\cdot hP\cdot h'P'$.
Note that by construction (see (\ref{eq:d1d2 as p cycles})) $\mu(P)$
and $\mu(P')$ intersect in two letters, say $\{a,b\}$, and that
these occur successively in both $P$ and $P'$. It then follows from
the proof of Theorem \ref{thm:(Bertram):-Regular even Bertram} \cite[Thm. 1]{Bertram 1972}, that $hP=AB$ and $h'P'=A'B'$ where the $p$-cycles can be
chosen such that $\{a,b\}\cap(\mu(B)\cup\mu(A'))=\emptyset$. Recall
that $\mu(h)\cap\mu(h')=\emptyset$, hence it follows that $B$ and
$A'$ commute and $z=C_{1}C_{2}\cdot AA'\cdot BB'\in I_{p}(A_{n})^{3}$
is a strong decomposition. 

In summary, we have shown that $\pw(z)\leq3$ where $z=C_{1}C_{2}d_{1}d_{2}r$ and $r$ consists of odd length cycles. Furthermore, we can choose
a strong decomposition of $z$ in $I_{p}(A_{n})^{3}$.

To finish the proof of Lemma \ref{lem:cd cycles second lemma} Case
(2) we now consider sub-case (\ref{eq:single C case}) where $z=C_{1}d_{1}g_{1}\dots g_{t}$,
$C_{1}$ is a $p$-cycle, $d_{1}$ is a $(p-1)$-cycle and $r=g_{1}\dots g_{t}\in S_{n}\backslash A_{n}$
such that $|\mu(r)|<p$. That $\pw(z)\leq3$ follows in a very similar
manner to the above:

Firstly suppose that we can apply strong Theorem \ref{thm:(Bertram):-Regular even Bertram}
to $d_{1}r.$ Writing $d_{1}r=AB\in I_{p}(A_{n})^{2}$ clearly gives $C_{1}d_{1}r=C_{1}\cdot A\cdot B\in I_{p}(A_{n})^{3}$, and this is a strong decomposition as required. Now this is possible
only if 
\[
|\mu(d_{1}r)|+c^{*}(d_{1}r)\leq2p,
\]
 which holds if and only if 
\[
|\mu(r)|+c^{*}(r)\leq p.
\]
 Henceforth we may therefore assume that 
\begin{equation}
|\mu(r)|+c^{*}(r)\geq p+1.\label{eq:mu+c}
\end{equation}
 Another possibility is that there exists a sufficient number of free
letters in $\mu(d_{1})$ to apply weak Theorem \ref{thm:(Bertram):odd bertram}
to $r$, and then finish with an application of strong Theorem \ref{thm:(Bertram):-Regular even Bertram}:
 first rearrange our element to consider $C_{1}rd_{1}$. Assuming
initially that $p-|\mu(r)|\leq|\mu(C_{1}d_{1})|-p$, we take free
letters from $d_{1}$ to apply weak Theorem \ref{thm:(Bertram):odd bertram},
writing $r=C'd'$, a product of a $p$-cycle and a $(p-1)$-cycle.
Note that this choice of free letters guarantees that $\mu(C_{1})\cap\mu(C')=\emptyset$
and hence $C_{1}C'\in I_{p}(A_{n})$. Next we apply strong Theorem \ref{thm:(Bertram):-Regular even Bertram}
to $d'd_{1}$. That this is possible is clear if $d_{1}$ and $d'$
are disjoint, but if $|\mu(d_{1})\cap\mu(d')|=m\geq1$ then we note
by lemma \ref{lem:lemma} that
\begin{eqnarray*}
|\mu(d'd_{1})|+c^{*}(d'd_{1}) & \leq & |\mu(d')|+|\mu(d_{1})|-m+c^{*}(d'd_{1})\\
 & = & 2p-2-m+c^{*}(d'd_{1})\\
 & \leq & 2p-2-m+m.
\end{eqnarray*}
 It is indeed a strong decomposition as
 \begin{eqnarray*}
|\mu(d'd_{1})| & \geq & |\mu(d')|+|\mu(d_{1})|-2m\\
 & = & 2p-2-2(p-|\mu (r)|-1)\\
 & = & 2|\mu(r)|\\
 & \geq & 4(p+1)/3,
\end{eqnarray*} 
by (\ref{eq:mu+c}).
So in conclusion, we decompose $d'd_{1}$ into $p$-cycles $PP'$
yielding a final strong decomposition 
\[
C_{1}rd_{1}=C_{1}C'd'd_{1}=C_{1}C'\cdot P\cdot P'\in I_{p}(A_{n})^{3}.
\]
As the above assumed that $p-|\mu(r)|\leq|\mu(C_{1}d_{1})|-p$, we
now assume the contrary and so we know the following about our element
$z=C_{1}d_{1}r$: 
\begin{equation}
p\leq|\mu(C_{1}d_{1})|\leq2p-3,\ \ 2\leq|\mu(r)|\leq p-1\label{eq:Assumption 1'}
\end{equation}
 
\begin{equation}
p+1\leq|\mu(r)|+c^{*}(r)\leq\frac{3(p-1)}{2}\label{eq:assumption 3'}
\end{equation}
 
\begin{equation}
|\mu(C_{1}d_{1})|+|\mu(r)|<2p\label{eq:assumption 4'}
\end{equation}
 Now as before (see (\ref{eq:long element and fixing cycle}) in Case
(1)) we write $C_{1}d_{1}r$ as a product of a single cycle containing
all letters in $\mu(C_{1}d_{1}r)$ and a ``fixing'' end cycle. Denote
the long element by $l$ and the end piece by $y$. This gives $C_{1}d_{1}r=ly$
such that $|\mu(l)|=|\mu(C_{1}d_{1}r)|$ and $|\mu(y)|=c^{*}(C_{1}d_{1})+c^{*}(r)$.
Now by assumption (\ref{eq:Assumption 1'}), $|\mu(C_{1}d_{1}r)|\geq p+2$
hence we can break off a $p$-cycle $P$, writing $ly=Pxy$ where
$|\mu(P)\cap\mu(x)|=\{a_{1,1}\}.$ We claim that we can apply Theorem
\ref{thm:(Bertram):-Regular even Bertram} to $xy$ and for this we
require 
\[
|\mu(xy)|+c^{*}(xy)\leq2p.
\]

This however follows in an identical fashion to Case (1) of this Lemma and so we omit the details. In summary,  we apply Theorem \ref{thm:(Bertram):-Regular even Bertram}
to write $xy=AB$, a product of two $p$-cycles. If $|\mu(xy)|\geq p$
then the result follows as this is a strong decomposition. Our only
remaining case is therefore when $|\mu(xy)|<p$, but this is no issue
as any extra letters we need can be taken from the cycle $P$.

This completes the proof of Lemma\textbf{ }\ref{lem:cd cycles second lemma}.
\end{proof}

\subsection{Proof of Theorem \ref{thm:my theorem} }
\begin{proof}
(Of Theorem \ref{thm:my theorem}). Firstly, if $p=2$ then the result is given by \cite[2.3]{AJM 2017}. Hence we may assume that $p$ is an odd prime. Let $\sigma\in A_{n}$ and consider
the disjoint cycle decomposition $\sigma=\prod_{i=1}^{k_{a}}a_{i}\prod_{j=1}^{k_{c}}c_{j}\prod_{s=1}^{k_{b}}b_{s}\prod_{t=1}^{k_{d}}d_{t}$
as in (\ref{eq:disjoint decomp of our initial element}). Let $g=\prod_{j=1}^{k_{c}}c_{j}\prod_{t=1}^{k_{d}}d_{t}\in A_{n}$
and assume initially that $|\mu(g)|+c^{*}(g)>2p$. By Lemma \ref{lem:cd cycles second lemma}
we have that $g\in I_{p}(A_{n})^{3}$ and we can choose a strong decomposition
for $g$. Similarly, by Lemmas \ref{lem:a cycles lemma} and \ref{lem:b cycles lemma}
each cycle $a_{i}$ and pair of cycles $b_{s}b_{s+1}$ has a strong
decomposition into at most three $\op$-elements. As each of the cycles
in the decomposition of $\sigma$ are disjoint, the $\op$-elements
commute where necessary to give $\sigma=\prod_{i=1}^{k_{a}}a_{i}\prod_{s=1}^{k_{b}}b_{s}\cdot g\in I_{p}(A_{n})^{3}$
as required. This leaves the case where $|\mu(g)|+c^{*}(g)\leq2p$:
firstly note that if $g=\sigma$ then the result follows immediately
from Lemma \ref{lem:first cd lemma}. If $g\neq\sigma$ and $|\mu(g)|<p$,
then there exists at least one sub-element $h$ of $\prod_{i=1}^{k_{a}}a_{i}\prod_{j=1}^{k_{c}}c_{j}\neq1$
to which Lemma \ref{lem:first cd lemma} Cases (1)-(5) applies. In
particular $gh\in I_{p}(A_{n})^{3}$ and this is a strong decomposition.
As $\left(\prod_{i=1}^{k_{a}}a_{i}\prod_{j=1}^{k_{c}}c_{j}\right)\backslash h$
has $p$-width at most 3 (and this is also a strong decomposition) by Lemmas
\ref{lem:a cycles lemma} and \ref{lem:b cycles lemma}, it follows
that $\pw(\sigma)\leq3$. Lastly, if $|\mu(g)|\geq p$ then the result
follows by Lemmas \ref{lem:a cycles lemma}, \ref{lem:b cycles lemma}
and \ref{lem:first cd lemma} in an identical fashion. This completes
the proof of Theorem \ref{thm:my theorem}.
\end{proof}

\appendix

\section{Alternative proof} \label{sec:Alt proof}
In this appendix we address the introductory remark concerning the work of Dvir \cite{Dvir}, and its application to the $p$-width of alternating groups. We were unaware of Theorem \ref{thm: Dvir Thm} at the time of writing, but as is evident below, it greatly simplifies the proof of Theorem \ref{thm:my theorem}. We hope that our earlier methods will still be of use to the reader. 

	\begin{defn}
	Let $D\in A_{n}$ and define
	\[ [D]:= \text{Conjugacy class of } D; \]	
	\[	r(D):=|\mu(D)|-c^{*}(D). \]
	
\end{defn}

\begin{thm} \cite[Thm. 10.2]{Dvir} \label{thm: Dvir Thm}
	Let $D\in A_{n} $ such that $D$ is not of cycle type $2^{k}$ for $k>1$. If $r(D)\geq \frac{1}{2}(n-1)$, then $[D]^3=A_{n}$. 
\end{thm}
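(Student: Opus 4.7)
I would attack Theorem \ref{thm: Dvir Thm} using the Frobenius class-multiplication formula combined with character bounds on the alternating groups. For any $g \in A_n$, the number of triples $(x,y,z) \in [D]^3$ with $xyz = g$ equals
\[
\frac{|[D]|^3}{|A_n|} \sum_{\chi \in \irr(A_n)} \frac{\chi(D)^3 \, \overline{\chi(g)}}{\chi(1)^2},
\]
so it suffices to prove this sum is strictly positive for every $g \in A_n$. The trivial character always contributes $+1$, and the contribution of any other character is bounded in absolute value by $|\chi(D)|^3/\chi(1)$ (using $|\chi(g)| \leq \chi(1)$). The task thus reduces to the uniform tail estimate
\[
\sum_{\chi \neq 1_{A_n}} \frac{|\chi(D)|^3}{\chi(1)} < 1.
\]

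The central ingredient is a Fomin--Lulov / Larsen--Shalev type character bound: under the hypothesis $r(D) \geq (n-1)/2$, every non-trivial irreducible $\chi$ of $A_n$ should satisfy $|\chi(D)| \leq \chi(1)^{\alpha}$ for some $\alpha < 2/3$ depending on how far $r(D)$ exceeds $(n-1)/2$. Plugging this into the tail and invoking the known convergence properties of the character-degree zeta function $\zeta^{A_n}(s) := \sum_{\chi} \chi(1)^{-s}$, which becomes finite for suitably small positive $s$ as $n$ grows, would yield the required bound for $n$ large enough; the remaining small-$n$ cases would be handled by explicit character-table computation.

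The two principal obstacles are as follows. First, the character bound must be made sharp with explicit constants: rough asymptotic estimates are insufficient because we need strict inequality uniformly over all $g$, and even a single exceptional character could break it. This in practice requires a careful analysis of the Murnaghan--Nakayama recursion, or an appeal to one of the refined versions of the Fomin--Lulov bound. Second, the exclusion of cycle type $2^k$ is essential: when $D$ is a product of transpositions, characters indexed by two-row partitions $(n-k, k)$ take values on $D$ that are exceptionally large relative to $\chi(1)$, and it is precisely these characters that force the hypothesis to exclude this case. The combinatorial work of verifying that this is the \emph{only} exception is the true heart of the matter, and is where I would expect the technical content of the proof to concentrate. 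An alternative, more combinatorial route -- closer in spirit to the rest of the present paper -- would begin by fixing $z \in [D]$ so that $gz^{-1}$ has favourable cycle structure and then applying a generalisation of Bertram's Theorem \ref{thm:(Bertram):-Regular even Bertram} to write $gz^{-1}$ as a product of two conjugates of $D$; the cycle-type-$2^k$ exception reappears here as the obstruction to applying Bertram-type constructions to elements whose cycles all have length below the threshold $p=3$.
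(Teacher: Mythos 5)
You should first note that the paper does not prove this statement at all: it is quoted verbatim as \cite[Thm.\ 10.2]{Dvir}, and Dvir's own argument is combinatorial (a direct analysis of cycle types occurring in products of conjugacy classes, in the same spirit as Bertram's theorems used elsewhere in this paper), not character-theoretic. So there is no internal proof to compare against, and your plan must stand on its own. Unfortunately its central estimate is false. The hypothesis $r(D)\geq\frac{1}{2}(n-1)$ is very weak and allows $D$ to fix a positive proportion of the $n$ points; this is exactly the regime in which low-degree characters are large. Concretely, take $D$ a single $p$-cycle in $A_{n}$ with $n=2p-1$, so that $r(D)=p-1=\frac{1}{2}(n-1)$ and the hypothesis holds. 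The character $\chi$ of degree $n-1$ coming from the partition $(n-1,1)$ restricts irreducibly to $A_{n}$ and satisfies $\chi(D)=\mathrm{fix}(D)-1=n-p-1=p-2$, whence
\[
\frac{|\chi(D)|^{3}}{\chi(1)}=\frac{(p-2)^{3}}{2p-2}\sim\frac{p^{2}}{2}\gg 1.
\]
Thus the tail sum $\sum_{\chi\neq 1}|\chi(D)|^{3}/\chi(1)$ cannot be made less than $1$, and no uniform bound $|\chi(D)|\leq\chi(1)^{\alpha}$ with $\alpha<2/3$ can hold: in this example $|\chi(D)|$ is roughly $\chi(1)/2$, i.e.\ the exponent is essentially $1$. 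The positivity of the Frobenius sum in Dvir's theorem therefore depends on genuine sign cancellation among the terms $\chi(D)^{3}\overline{\chi(g)}$, which the triangle inequality you invoke destroys at the first step. (Fomin--Lulov applies only to permutations all of whose cycles have equal length, and the Larsen--Shalev exponential character bounds are asymptotic with ineffective $o(1)$ terms, postdate Dvir by decades, and in any case give exponent close to $1$ for elements with many fixed points.)

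Your closing remark --- that one could instead fix $z\in[D]$ so that $gz^{-1}$ has favourable cycle structure and then apply a Bertram-type result to write $gz^{-1}$ as a product of two conjugates of $D$ --- is much closer to how such covering theorems are actually proved, and to how the present paper proves its main theorem without Dvir. But as written it is a one-sentence gesture: the entire difficulty lies in showing that a suitable $z$ exists for every $g$ and every $D$ meeting the hypothesis, and in locating where the cycle type $2^{k}$ genuinely obstructs the construction. Neither route is carried out, so the proposal has a genuine gap, and the character-theoretic route cannot be repaired in the form stated.
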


\begin{cor} \label{cor:Dvir cor}
	Fix a prime $p\geq 3$ and suppose that $n\geq kp$ for some $k\in \mathbb{N}_{\geq 1}$. Let $D \in A_{n}$ have cycle type $(p^k,1^{n-kp})$. Suppose that $[D]^{3}\neq A_{n}$, then $n\geq (k+1)p$.
	
\end{cor}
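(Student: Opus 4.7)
The plan is a one-line application of Theorem~\ref{thm: Dvir Thm} together with a short numerical check. First I will compute the invariants of $D$: since $D$ has cycle type $(p^{k}, 1^{n-kp})$, its support has size $|\mu(D)| = kp$ and the number of non-trivial cycles is $c^{*}(D) = k$, so
\[
r(D) = |\mu(D)| - c^{*}(D) = k(p-1).
\]
Moreover, because $p \geq 3$, the cycle type of $D$ is certainly not $2^{m}$ for any $m > 1$, so the hypotheses of Theorem~\ref{thm: Dvir Thm} are satisfied (other than possibly the bound on $r(D)$, which is exactly what we want to rule out).

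Next I will take the contrapositive of Theorem~\ref{thm: Dvir Thm}: the assumption $[D]^{3} \neq A_{n}$ forces $r(D) < (n-1)/2$, which rearranges to $2k(p-1) < n - 1$, and since $n$ is an integer this gives
\[
n \;\geq\; 2k(p-1) + 2 \;=\; 2kp - 2k + 2.
\]

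Finally, to conclude $n \geq (k+1)p$, it suffices to verify the elementary inequality $2kp - 2k + 2 \geq (k+1)p$, which simplifies to $(k-1)(p-2) \geq 0$. This holds since $k \geq 1$ and $p \geq 3$, completing the proof.

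The main ``obstacle'' is really non-existent: all the content is hidden in Theorem~\ref{thm: Dvir Thm}, and the only verification required is the easy integer inequality at the end, whose validity is immediate from the standing hypotheses $k \geq 1$ and $p \geq 3$.
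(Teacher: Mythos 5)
Your proof is correct and follows essentially the same route as the paper: apply the contrapositive of Theorem~\ref{thm: Dvir Thm} to get $k(p-1)<(n-1)/2$, then finish with elementary integer arithmetic. The only (cosmetic) difference is that you give a single direct computation via the identity $2k(p-1)+2-(k+1)p=(k-1)(p-2)\geq 0$, whereas the paper splits into the cases $k=1$ and $k\geq 2$ and argues the latter by contradiction.
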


\begin{proof}
	Firstly, if $k=1$ then (by Theorem \ref{thm: Dvir Thm}) the assumption that $[D]^{3}\neq A_{n}$ yields $(n-1)/2>p-1$ which is true if and only if $n>2p-1$.
	Now let $k \geq 2$ and suppose for a contradiction that $n<(k+1)p$. Now by assumption (and therefore by Theorem \ref{thm: Dvir Thm}) $(n-1)/2>k(p-1)$. Hence \[
	\frac{(k+1)p-1}{2}>k(p-1). \]
	This then reduces to $2k-1>p(k-1)\geq 3k-3$  as $p\geq 3$. But this is a contradiction.
\end{proof}

\textbf{Alternative proof of Theorem \ref{thm:my theorem} }
	Firstly, if $p=2$ then the result is given by \cite[2.3]{AJM 2017}. Hence we may assume that $p$ is an odd prime. The result is then immediate from Corollary \ref{cor:Dvir cor} as $n$ is finite. \qedsymbol

\end{document}